 \DeclareMathOperator{\perm}{Sym}
 \DeclareMathOperator{\soc}{soc}
\DeclareMathOperator{\aut}{Aut} \DeclareMathOperator{\out}{Out}
 \DeclareMathOperator{\frat}{Frat}
 \DeclareMathOperator{\AGL}{AGL}
\DeclareMathOperator{\core}{Core}
\DeclareMathOperator{\alt}{Alt}
\DeclareMathOperator{\End}{End} \DeclareMathOperator{\h}{{H^1}}
\DeclareMathOperator{\der}{Der}
\DeclareMathOperator{\ider}{InnDer} \DeclareMathOperator{\diag}{diag}
\newtheorem{thm}{Theorem}%[section]
 \newtheorem{lemma}[thm]{Lemma}
\newtheorem{prop}[thm]{Proposition}
\numberwithin{equation}{section}
\renewcommand{\footnote}{\endnote}
\newcommand{\ignore}[1]{}\makeglossary
\begin{document}
\bibliographystyle{amsplain}

%\begin{document}

\title[The Chebotarev invariant]{The Chebotarev invariant of a finite  group:\\ a conjecture of  Kowalski and  Zywina}

\author{Andrea Lucchini}
\address{Dipartimento di Matematica,
	Via Trieste 63, 35121 Padova, Italy.}
\email{lucchini@math.unipd.it}
\thanks{Partially supported by Universit\`a di Padova (Progetto di Ricerca di Ateneo: \lq\lq Invariable generation of groups\rq\rq).}
%\subjclass{20D10, 20F05, 05C25}
%\footnotetext{}
%\date{20th of July 2008}

\begin{abstract}A subset $\{g_1, \ldots , g_d\}$ of a finite group $G$  invariably generates $G$ if
	$\{g_1^{x_1}, \ldots , g_d^{x_d}\}$ generates $G$ for every choice of $x_i \in G$. The Chebotarev invariant $C(G)$ of $G$ is the expected value of the random variable $n$ that is minimal
	subject to the requirement that $n$ randomly chosen elements of $G$ invariably generate $G$. Confirming a conjecture of Kowalski and  Zywina, we prove
	that there exists an absolute constant $\beta$ such that $C(G) \leq  \beta\sqrt{|G|}$
	for all finite  groups $G.$
\end{abstract}
\maketitle

\section{Introduction}
We say that
a subset $\{g_1, \ldots , g_d\}$ of a finite group $G$  invariably generates $G$ if
$\{g_1^{x_1}, \ldots , g_d^{x_d}\}$ generates $G$ for every choice of $x_i \in G$. The Chebotarev invariant $C(G)$ of $G$ is the expected value of the random variable $n$ that is minimal
subject to the requirement that $n$ randomly chosen elements of $G$ invariably generate $G$.
The main motivation for introducing the invariant $C(G)$ is the relationship to Chebotarev's Theorem and the calculation of Galois groups of polynomials with integer coefficients.
Chebotarev's Theorem
 provides elements of a suitable Galois group $G,$ where the elements are obtained only up to
conjugacy in $G;$  the interest in the study of $C(G)$ comes from computational group theory, where there is a need to
know how long one should expect to wait in order to ensure that choices of representatives from the
conjugacy classes provided by Chebotarev's Theorem will generate $G.$ This is discussed more carefully
in \cite{dixon} and \cite{kz}.

\

 In response to a question of Kowalski
and Zywina \cite{kz}, Kantor, Lubotzky and Shalev \cite{ig} bounded the size of a randomly
chosen set of elements of $G$ that is likely to generate $G$ invariably. As a corollary of their result, they proved
that there exists an absolute constant $c$ such that
$C(G) \leq c\sqrt{|G|\log|G|}$
for all finite groups $G$ (\cite[Theorem 1.2]{ig}).
This bound is close to best possible: as it is noticed in \cite{ig},  sharply 2-transitive groups provide an infinite family of groups $G$ for which $C(G) \thicksim  \sqrt{|G|}$. In particular, $C(\AGL(1,q)) \thicksim q$ as
$q \to \infty$ \cite[Proposition 4.1]{kz}. In fact \cite[Section 9]{kz}
asks whether $C(G) = O(\sqrt{|G|})$ for all finite groups $G.$ In this paper we give
an affirmative answer.

\begin{thm}\label{mainuno}
	There exists an absolute constant $\beta$ such that $C(G) \leq  \beta\sqrt{|G|}$
	for all finite  groups $G.$
\end{thm}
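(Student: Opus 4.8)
The plan is to work directly with the representation $C(G)=\sum_{k\ge 0}\bigl(1-P_I(G,k)\bigr)$, where $P_I(G,k)$ is the probability that $k$ independent uniformly random elements of $G$ invariably generate $G$. A $k$-tuple fails to invariably generate $G$ exactly when all its entries are conjugate into one common maximal subgroup, so a union bound over conjugacy classes of maximal subgroups gives $1-P_I(G,k)\le\sum_{[M]}p_M^{\,k}$, where $p_M$ is the probability that a random element of $G$ is conjugate into $M$; equivalently $1-p_M$ is the proportion of derangements in the natural transitive action of $G$ on the $[G:M]$ cosets of $M$. I would first record, for every maximal $M$, the two elementary bounds on $a_M:=(1-p_M)^{-1}$: from $\bigl|\bigcup_x M^x\bigr|\le[G:N_G(M)](|M|-1)+1$ one gets $1-p_M\ge\tfrac12|M|^{-1}$, hence $a_M\le 2|M|$; and from the Cameron--Cohen lower bound on the derangement proportion of a transitive group of degree $n=[G:M]>1$ one gets $1-p_M\ge 1/[G:M]$, hence $a_M\le[G:M]$. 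Consequently $a_M\le\min\{2|M|,[G:M]\}\le 2\sqrt{|G|}$, so $p_M\le 1-\tfrac1{2\sqrt{|G|}}$ and $p_M^{\,k}\le e^{-k/a_M}$ for every class $[M]$.

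Using only this together with the trivial bound that $G$ has fewer than $|G|$ maximal subgroups recovers nothing better than $C(G)=O(\sqrt{|G|}\log|G|)$; the point of the proof is therefore to control the number of ``dangerous'' classes, those with $a_M$ large. Since $M$ is a union of $\Core_G(M)$-cosets, $p_M$ depends only on the primitive quotient $G/\Core_G(M)$, and $[M]\mapsto\bigl(\Core_G(M),\,[M/\Core_G(M)]\bigr)$ is injective, so it suffices to understand dangerous core-free maximal subgroups of the primitive quotients of $G$. Here the classification of finite simple groups enters: for a primitive group whose socle is nonabelian the proportion of derangements is bounded below by an absolute constant (for the almost simple case this is the Fulman--Guralnick theorem confirming the Boston--Shalev conjecture, and the remaining O'Nan--Scott types reduce to it or are handled directly). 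Hence every dangerous class comes from an affine-type primitive quotient $V\rtimes L_0$; writing out the derangement count in that case shows $p_M$ equals the mean of $|\mathrm{Fix}_V(g)|^{-1}$ over $g\in L_0$, so $a_M>B$ forces all but at most a $2/B$-fraction of $L_0$ to act on $V$ without nonzero fixed vectors. By (a suitable refinement of) Hering's classification of the linear groups that are ``almost fixed-point-free'' in this sense, $L_0$ must then be, up to finitely many exceptions, essentially contained in $\Gamma L_1(|V|)$, and in particular $H^1(L_0,V)$ is then bounded by an absolute constant (indeed trivial in the $\Gamma L_1$ case).

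The heart of the matter is then the counting estimate
\[
\nu(B):=\#\{[M]\ :\ a_M>B\}\ \le\ C_0\,|G|\,B^{-2}\qquad(B\ge 1),
\]
for an absolute constant $C_0$. I would prove it through the crown structure of $G$, after first reducing to $\frat(G)=1$ (harmless, since $C(G)=C(G/\frat G)$ because Frattini elements are nongenerators even in the invariable setting). A dangerous class is a complement to an abelian chief factor $A$ of order $q_A:=|A|$ sitting inside the $A$-crown $C_G(A)/R_G(A)\cong A^{\delta_A}$, and the number of conjugacy classes of such complements equals $\frac{|D|^{\delta_A}-1}{|D|-1}\cdot|H^1(L_{0,A},A)|$ with $D=\mathrm{End}_{L_{0,A}}(A)$; by the previous paragraph $|H^1(L_{0,A},A)|=O(1)$ for dangerous $A$, so this is $O(q_A^{\delta_A-1})$. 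Dangerousness also forces $q_A\ge a_M>B$ and $|L_{0,A}|\ge a_M/2>B/2$, while the crown relation gives $q_A^{\delta_A}\,|L_{0,A}|=|G:R_G(A)|\le|G|$; hence each such crown contributes $O\!\bigl(q_A^{\delta_A-1}\bigr)=O\!\bigl(|G|\,(q_A|L_{0,A}|)^{-1}\bigr)=O(|G|/B^2)$. Summing over the distinct crowns is the delicate part: one exploits that abelian chief factors of different $G$-isomorphism types occupy disjoint layers of a chief series, so $\prod_A q_A^{\delta_A}\le|G|$, and each additional dangerous crown beyond the first drags a further factor $\ge B$ into the relevant denominators; this geometric gain beats the number of crowns and yields the bound with an absolute $C_0$. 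Assembling the Hering-type input, the bound on $H^1$, and this crown bookkeeping is where essentially all the work lies.

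Granting the estimate on $\nu(B)$, the conclusion is routine. Partition the classes $[M]$ by the dyadic range of $a_M$. Those with $a_M\le 2$ contribute at most $|G|\,2^{-k}$ to $1-P_I(G,k)$; for $i\ge 1$ there are at most $\nu(2^i)\le C_0|G|2^{-2i}$ classes with $a_M\in(2^i,2^{i+1}]$, and since each contributes $p_M^{\,k}\le e^{-k2^{-i-1}}$ they contribute at most $C_0|G|\,2^{-2i}e^{-k2^{-i-1}}$ in total. Hence
\[
1-P_I(G,k)\ \le\ |G|\,2^{-k}\ +\ C_0|G|\sum_{i\ge 1}2^{-2i}\,e^{-k2^{-i-1}}.
\]
Summing over $k\ge 0$, the first term gives $\sum_k\min\{1,|G|2^{-k}\}=O(\log|G|)$. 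For the second, the inner sum is $O(k^{-2})$ for $k$ bounded away from $0$ (its continuous analogue is $\int_0^{1/2}ue^{-ku}\,du\le k^{-2}$), so $\sum_k\min\{1,\,C_0|G|\sum_i 2^{-2i}e^{-k2^{-i-1}}\}=O(\sqrt{|G|})$; here the exponent $-2$ in the bound on $\nu(B)$ is exactly what removes the logarithm present in the Kantor--Lubotzky--Shalev estimate. Therefore $C(G)=O(\log|G|)+O(\sqrt{|G|})=O(\sqrt{|G|})$, and any absolute $\beta$ exceeding the implied constant works. The one genuinely hard ingredient is the bound on $\nu(B)$; the probabilistic reduction, the derangement inequalities and the final summation are all soft.
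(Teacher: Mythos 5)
Your high-level strategy is genuinely different from the paper's. You work directly with $C(G)=\sum_{k\ge0}\bigl(1-P_I(G,k)\bigr)$, bound $1-P_I(G,k)$ by a union bound over conjugacy classes of maximal subgroups, and reduce everything to a counting estimate $\nu(B)\le C_0|G|B^{-2}$ on the number of classes $[M]$ whose derangement proportion is below $1/B$, finishing with a clean dyadic summation. The paper instead proves the lower bound $P_I(G,k)\ge 2/9$ for $k\ge\bar c\sqrt{|G|}$ by an induction that peels off crowns one layer at a time, using a Chernoff-type tail bound (Proposition~\ref{binomiale}) together with the cohomological estimate of Proposition~\ref{como}, and then converts this into $C(G)=O(\sqrt{|G|})$ via the elementary fact that $P_I(G,k)\ge\epsilon$ implies $C(G)\le k/\epsilon$. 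Your soft ingredients (Cameron--Cohen, $a_M\le 2\sqrt{|G|}$, Boston--Shalev/Fulman--Guralnick, the final integration) are all correct and the overall scheme would yield the theorem \emph{if} the estimate on $\nu(B)$ were established; that estimate is the genuinely hard part and you have not established it.

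There are two concrete gaps in your treatment of $\nu(B)$. First, your bound on the number of dangerous classes within a single crown of abelian type $A$ requires $|H^1(L_{0,A},A)|=O(1)$, which you extract from ``a suitable refinement of Hering's classification'' of linear groups that are almost fixed-point-free. Hering's theorem classifies affine $2$-transitive groups, i.e.\ groups transitive on $V\setminus\{0\}$; the condition that a large proportion of $L_0$ be fixed-point-free on $V$ is different and weaker, and I know of no refinement that forces $L_0$ into $\Gamma L_1(|V|)$ up to finitely many exceptions, let alone one that bounds $H^1$. (Recall that the Guralnick conjecture, which would give a universal bound on $\dim H^1$, is believed to be false, as the paper itself notes.) The paper's Proposition~\ref{como} is a genuine substitute but of a quite different shape: under the hypothesis $|V|\le|H|$ it shows that either $m=\dim H^1$ is bounded or $p|H|\ge m^2$ with $p$ the proportion of elements fixing a nonzero vector; the complementary case $|V|>|H|$ is handled separately in the paper (case (d) of Lemma~\ref{cruciale}) by a different device, and your argument has no analogue of either. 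If you follow your route you would have to prove a statement of this kind and then re-derive the count. Second, even granting a bounded contribution $O(|G|/B^2)$ per dangerous crown, your summation over crowns is a handwave: the constraint $\prod_A q_A^{\delta_A}\le|G|$ with each $q_A>B$ only bounds the number of dangerous crowns by $\log_B|G|$, so the naive sum gives $\nu(B)=O(|G|\log_B|G|\,/B^2)$, which re-introduces the logarithm you are trying to remove. Removing it requires exploiting the interaction between $q_A^{\delta_A}$ and $|L_{0,A}|$ across distinct crowns, and you do not say how. Until both of these points are made precise, the claimed estimate $\nu(B)=O(|G|B^{-2})$ --- and hence the proof --- is incomplete.
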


For $k\geq 1,$ let $P_I(G,k)$ be the probability that $k$ randomly chosen elements of $G$ generate $G$ invariably.
An easy argument in probability theory shows that if $P_I(G,k)\geq \epsilon,$ then $C(G)\leq k/\epsilon.$ Indeed we obtain
Theorem \ref{mainuno} as a corollary of the following result.

\begin{thm}\label{due}
	For any $\epsilon > 0$ there exists $\tau_\epsilon$ such that $P_I(G,k)\geq 1-\epsilon$ for any finite  group $G$ and any $k\geq  \tau_\epsilon\sqrt{|G|}.$
\end{thm}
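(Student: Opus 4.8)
The plan is to proceed by induction on $|G|$, reducing the general problem to the case of (almost) simple groups, where one can exploit detailed structural information. First I would fix the threshold to be obtained and work with the complementary probability: I want to show that the probability that $k$ random elements $g_1,\dots,g_k$ fail to invariably generate $G$ is at most $\epsilon$ once $k\geq\tau_\epsilon\sqrt{|G|}$. The key combinatorial observation is the standard one: a $k$-tuple fails to invariably generate $G$ if and only if there is a maximal subgroup $M$ of $G$ such that every $g_i$ is conjugate into $M$; equivalently, the union of the conjugates of $M$ covers all of $\{g_1,\dots,g_k\}$. Writing $p_M$ for the probability that a single uniformly random element of $G$ lies in some conjugate of $M$ (i.e. $|{\bigcup_{x\in G}M^x}|/|G|$), the probability that all $k$ elements lie in conjugates of a \emph{fixed} $M$ is $p_M^k$, and a union bound over (conjugacy classes of) maximal subgroups gives
\[
1-P_I(G,k)\ \leq\ \sum_{M}\ p_M^{\,k},
\]
the sum over representatives of conjugacy classes of maximal subgroups. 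The whole game is to control this sum.

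The main difficulty, and the heart of the argument, is that $p_M$ can be very close to $1$ (for instance when $M$ is a large normal-ish subgroup, or when $G$ has many conjugates of $M$ that overlap heavily), so one cannot naively bound each term. I would split the maximal subgroups into two regimes. For maximal subgroups $M$ of \emph{large} index, say $[G:M]\geq |G|^{1/2}$ or some comparable threshold, one has $p_M\leq 1-[G:M]^{-1}+(\text{something smaller})$ by the usual Jordan-type estimate (a proper subgroup and its conjugates cannot cover more than a $1-1/[G:M]$ fraction plus lower-order corrections), so $p_M^k\leq e^{-k/[G:M]}$, and since the number of conjugacy classes of maximal subgroups of index $n$ in $G$ is polynomially bounded in $n$ (this is where a result on the number of maximal subgroups is needed), the contribution of this regime is summable and made $\leq\epsilon/2$ by taking $k\geq\tau\sqrt{|G|}$ with $\tau$ depending only on $\epsilon$. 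The delicate regime is maximal subgroups of \emph{small} index; these are few in number but individually dangerous. Here I would use the structure of $G$: pass to a chief factor and argue that small-index maximal subgroups correspond to primitive permutation actions of bounded-ish degree, where the O'Nan--Scott theorem and the classification of finite simple groups allow one to bound $p_M$ away from $1$ by an absolute constant in the almost simple and diagonal/product cases, and to handle the affine case by a direct computation (the affine case is essentially the $\AGL(1,q)$ phenomenon already noted in the introduction, and it is precisely here that the $\sqrt{|G|}$ — rather than a smaller power — is forced and must be accommodated rather than beaten).

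Concretely the induction step goes: let $N$ be a minimal normal subgroup of $G$. If a $k$-tuple invariably generates $G/N$ and also, modulo the relevant coset, "covers" $N$ appropriately, then it invariably generates $G$; this lets me bound $1-P_I(G,k)$ in terms of $1-P_I(G/N,k)$ plus an error term coming from the maximal subgroups of $G$ \emph{not} containing $N$, i.e.\ those supplementing $N$. By induction $1-P_I(G/N,k)\leq\epsilon/2$ since $|G/N|\leq|G|$ and $k\geq\tau_\epsilon\sqrt{|G|}\geq\tau_\epsilon\sqrt{|G/N|}$; so it remains to bound the supplement term by $\epsilon/2$. When $N$ is abelian (the affine situation) this is a direct estimate on how many translates of a hyperplane-type subgroup are needed, giving the clean $\sqrt{|N|}\le\sqrt{|G|}$ bound. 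When $N=S^t$ is a product of nonabelian simple groups, the supplementing maximal subgroups have the form (point stabilizer)$\cap N$ with $S$ acting, and here I invoke the simple-group input: for a nonabelian finite simple group $S$ and any maximal subgroup $H$, $p_H$ is bounded away from $1$, indeed $p_H\le 1-c$ for an absolute $c>0$ except in a controlled list, and the exceptional cases (again sharply $2$-transitive-like, or small Mathieu/alternating coincidences) satisfy $p_H\le 1-c/\sqrt{|S|}$, which after raising to the power $k\ge\tau_\epsilon\sqrt{|G|}\ge\tau_\epsilon\sqrt{|S|}$ is still $\le\epsilon$-small when summed against the polynomially-many classes. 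Assembling the two halves completes the induction; the final $\tau_\epsilon$ is, say, an explicit function like $C\log(1/\epsilon)$ times the absolute constants extracted from the simple-group and maximal-subgroup-counting inputs. The step I expect to be the genuine obstacle is the uniform bound "$p_H$ bounded away from $1$ for maximal $H\le S$", together with pinning down the exact list of exceptions and their $p_H$-rates — this is a CFSG-dependent case analysis and is where the real work lies.
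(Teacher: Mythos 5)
Your outline sets up a union bound
\[
1-P_I(G,k)\ \leq\ \sum_{M}\ p_M^{\,k}
\]
over conjugacy classes of maximal subgroups, splits the sum by index, and brings in the Liebeck--Pyber--Shalev count of maximal subgroups, Cameron--Cohen/Jordan derangement bounds and the Boston--Shalev/Fulman--Guralnick results for simple groups. For the non-affine part of the sum this is in essence what Kantor--Lubotzky--Shalev do and what the paper imports as Lemma~\ref{nonabcr}, so that half of the argument is on the right track. The difficulty is that you treat the affine case as a throwaway: ``a direct estimate on how many translates of a hyperplane-type subgroup are needed, giving the clean $\sqrt{|N|}\le\sqrt{|G|}$ bound.'' That is precisely backwards: the affine case is the entire bottleneck, and it occupies Sections 2--4 of the paper. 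Two concrete problems. First, your counting claim that the number of conjugacy classes of maximal subgroups of a given index $n$ is polynomial in $n$ is false in general (already $C_p^n$ has $(p^n-1)/(p-1)$ classes of index-$p$ maximal subgroups); only the cruder global bound $c|G|^{3/2}$ is available, which is fine for the non-affine branch but not by itself for the affine one. Second, and more seriously, you never mention first cohomology. For an abelian chief factor $V$ with $H=G/C_G(V)$, the maximal supplements to $V$ are complements, and there are $|\h(H,V)|=q^{m}$ conjugacy classes of them; this $q^m$ enters the union bound and must be paid for. Moreover an element $hw$ with $C_V(h)=0$ lies in \emph{every} $\bigcup_g M^g$ for $M$ a complement (since $[h,V]=V$), so only the indices $i$ with $C_V(h_i)\neq 0$ help at all, and one needs roughly $m$ of them. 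Making $k\geq\tau\sqrt{|G|}$ suffice then forces a dichotomy on $m$ versus the proportion $p$ of elements of $H$ fixing a nonzero vector; this is exactly Proposition~\ref{como}, a genuinely new CFSG-dependent ingredient of the paper (using Guralnick--Kimmerle for alternating groups, Landazuri--Seitz/Seitz--Zalesskii/Tiep in cross characteristic, and $|H|_r\geq|S|^{1/3}$ in defining characteristic). Without this you cannot control the sum in the regime where $p$ is as small as $1/|V|$ and $|V|\approx|H|$, which is the sharp $\AGL(1,q)$-type regime you yourself flag.

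There is also a softer gap in your induction scheme. You write ``By induction $1-P_I(G/N,k)\leq\epsilon/2$,'' but the inductive hypothesis only gives $\leq\epsilon$, so the recursion does not close as stated; one must either prove a fixed-constant version first (as the paper does with Theorem~\ref{duenoni} and $P_I\geq 2/9$) and then boost by blocking, or allocate the budget $k$ across levels. Relatedly, the paper does not induct on an arbitrary minimal normal subgroup but on the crown decomposition (Lemmas~\ref{corona} and~\ref{sotto}), which is what lets ``invariably generates modulo $R$'' and ``invariably generates modulo $U$'' be glued into ``invariably generates $G$'' (Lemma~\ref{relativo}); your phrase ``covers $N$ appropriately'' is hiding exactly this. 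So: the non-affine branch and the overall shape (induction plus CFSG inputs) are sound, but as written the proposal is missing the cohomological analysis of the abelian crowns, which is the real content of the theorem.
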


One of the ingredients used in the proof of Theorem \ref{due} is the notion of crown, introduced by Gasch\"utz in \cite{gaz} in the case of finite solvable groups and
 generalized in \cite{paz} to arbitrary finite groups. The property of the crowns are enough to prove the theorem in the case of solvable groups, but in order to apply our
 arguments to arbitrary finite groups, we need some results
relying on the classification of the finite simple groups.
The first is a bound on the order of the first cohomology group
of a finite group over a faithful irreducible module: if $V$ is an irreducible faithful $G$-module over
a finite field, then $|\h(G,V)|\leq {\sqrt{V}}<|V|$ (see \cite {AG}
and \cite{guho}). This result is near to be sufficient for our purposes, but we need
a more precise information in the particular case when $|V|\leq |G|$ and the proportion of elements of $G$ fixing no nontrival vector of $V$ is small (see Proposition \ref{diff}). Other two consequences of the classification of the finite simple groups are necessary to prove Lemma \ref{nonabcr}: there exists an absolute constant
$c_1$ such that any finite group $G$ has at most $c_1|G|^{3/2}$
maximal subgroups \cite[Theorem 1.3]{lps}; the proportion of fixed-point-free permutations
in a non-affine primitive group of degree $n$ is at least $c_2/ \log n,$ for some absolute constant
$c_2 > 0$ \cite[Theorem 8.1]{fg1}.
This last result in turn relies on a conjecture
made independently by Boston  and Shalev, stating that there exists an absolute constant $\epsilon > 0$ such that the proportion of fixed-point-free elements in any finite simple
transitive permutation group is at least $\epsilon$. This conjecture was proved for alternating groups by {\L}uczak and L. Pyber in \cite{lp} and for the simple groups of Lie type by Fulman and Guralnick in a series of four papers (\cite{fg1}, \cite{fg}, \cite{fgint}, \cite{fglast}).

\section{Crowns in finite groups}

Let $L$ be   a monolithic primitive group
and let $A$ be its unique minimal normal subgroup. For each positive integer $k$,
let $L^k$ be the $k$-fold direct
product of $L$.
The crown-based power of $L$ of size  $k$ is the subgroup $L_k$ of $L^k$ defined by
$$L_k=\{(l_1, \ldots , l_k) \in L^k  \mid l_1 \equiv \cdots \equiv l_k \ {\mbox{mod}} A \}.$$
Equivalently, $L_k=A^k \diag L^k$. %For $k=0$ we set $L_k=\{1\}$.

Following  \cite{paz},  we say that
two irreducible $G$-groups $A$ and $B$  are  {$G$-equivalent} and we put $A \sim_G B$, if there is an
isomorphism $\Phi: A\rtimes G \rightarrow B\rtimes G$ such that the following diagram commutes:

\begin{equation*}
\begin{CD}
1@>>>A@>>>A\rtimes G@>>>G@>>>1\\
@. @VV{\phi}V @VV{\Phi}V @|\\
1@>>>B@>>>B\rtimes G@>>>G@>>>1
\end{CD}
\end{equation*}

\

Note that two $G$\nobreakdash-isomorphic
$G$\nobreakdash-groups are $G$\nobreakdash-equivalent. In the particular case where   $A$ and $B$ are abelian the converse is true:
if $A$ and $B$ are abelian and $G$\nobreakdash-equivalent, then $A$
and $B$ are also $G$\nobreakdash-isomorphic.
It is proved (see for example \cite[Proposition 1.4]{paz}) that two  chief factors $A$ and $B$ of $G$ are  $G$-equivalent if and only if  either they are  $G$-isomorphic between them or there exists a maximal subgroup $M$ of $G$ such that $G/\core_G(M)$ has two minimal normal subgroups $N_1$ and $N_2$
$G$-isomorphic to $A$ and $B$ respectively. For example, the  minimal normal subgroups of a crown-based power $L_k$ are all $L_k$-equivalent.

Let $A=X/Y$ be a chief factor of $G$. A complement $U$ to $A$ in $G$ is a subgroup $U $ of $G$ such that $UX=G$ and $U \cap X=Y$. We say that   $A=X/Y$ is a Frattini chief factor if  $X/Y$ is contained in the Frattini subgroup of $G/Y$; this is equivalent to say that $A$ is abelian and there is no complement to $A$ in $G$.
The  number $\delta_G(A)$  of non-Frattini chief factors $G$-equivalent to $A$   in any chief series of $G$  does not depend on the series. Now,
we denote by  $L_A$  the  monolithic primitive group  associated to $A$,
that is
$$L_{A}=
\begin{cases}
A\rtimes (G/C_G(A)) & \text{ if $A$ is abelian}, \\
G/C_G(A)& \text{ otherwise}.
\end{cases}
$$
% and we identify $\soc (L_A)$ with $A$.
If $A$ is a non-Frattini chief factor of $G$, then $L_A$ is a homomorphic image of $G$.
More precisely,  there exists
a normal subgroup $N$ of $G$ such that $G/N \cong L_A$ and $\soc(G/N)\sim_GA$. Consider now  all the normal subgroups $N$ of $G$ with the property that  $G/N \cong L_A$ and $\soc(G/N)\sim_G A$:
the intersection $R_G(A)$ of all these subgroups has the property that  $G/R_G(A)$ is isomorphic to the crown-based  power $(L_A)_{\delta_G(A)}$.
% $\delta_G(A)$ is exactly the maximal integer $k$ such that $L_{A,k}$ is a
% homomorphic image of $G$ (see Proposition 9 in \cite{crowns}).
% Note that if $A$ is a Frattini chief-factor, then  $\delta_G(A)=0$ and $L_{A,  \delta_G(A)}=\{1\}$.
The socle $I_G(A)/R_G(A)$ of $G/R_G(A)$ is called the $A$-crown of $G$ and it is  a direct product of $\delta_G(A)$ minimal normal subgroups $G$-equivalent to $A$.

\begin{lemma}{\cite[Lemma 1.3.6]{classes}}\label{corona}
		Let $G$ be a finite  group with trivial Frattini subgroup. There exists
		a crown $I_G(A)/R_G(A)$ and a non trivial normal subgroup $U$ of $G$ such that $I_G(A)=R_G(A)\times U.$
	\end{lemma}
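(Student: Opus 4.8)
The statement amounts to choosing a non-Frattini chief factor $A$ of $G$ for which $R_G(A)$ admits a nontrivial complement in $I_G(A)$ that is normal in $G$, so the whole point is to select $A$ well; we may assume $G\neq 1$.

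First I would fix a minimal normal subgroup $N$ of $G$ and set $A:=N$. This $A$ is non-Frattini: if $N$ is nonabelian this is automatic, and if $N$ is abelian then $N\not\le\Phi(G)=1$, so $N$ is complemented. Next I would prove that $N\cap R_G(A)=1$. Since $N$ is non-Frattini there is a maximal subgroup $M$ with $G=NM$; set $K=\core_G(M)$, so that $N\cap K\trianglelefteq G$ is a proper subgroup of $N$ and hence $N\cap K=1$. Then $G/K$ is primitive and $NK/K\cong N$ is one of its (at most two) minimal normal subgroups; invoking the description of finite primitive groups one obtains a normal subgroup $K^*\supseteq K$ with $G/K^*$ monolithic, $\soc(G/K^*)\sim_G A$ and $N\cap K^*=1$ (take $K^*=K$ when $G/K$ is monolithic, and $K^*$ the preimage of the second minimal normal subgroup otherwise, using that the two preimages of minimal normal subgroups of $G/K$ intersect exactly in $K$). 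Since such a $K^*$ is among the normal subgroups whose intersection defines $R_G(A)$, we get $R_G(A)\le K^*$, hence $N\cap R_G(A)=1$.

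Now $NR_G(A)/R_G(A)\cong N$ is a minimal normal subgroup of $G/R_G(A)$, so it lies in $\soc(G/R_G(A))=I_G(A)/R_G(A)$ and, the socle being a direct product of minimal normal subgroups, it is a direct factor of it; in particular $N\le I_G(A)$. If $\delta_G(A)=1$ then $G/R_G(A)\cong L_A$ is monolithic, so $I_G(A)/R_G(A)$ is itself a minimal normal subgroup and therefore equals $NR_G(A)/R_G(A)$, whence $I_G(A)=R_G(A)\times N$ and we may take $U=N$.

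In general ($\delta_G(A)>1$) I would run the same argument for every minimal normal subgroup of $G$ that is $G$-equivalent to $A$ and let $S$ be their product. Each of them meets $R_G(A)$ trivially and lies in $I_G(A)$, so $S\le I_G(A)$; and since $S$ is a product of minimal normal subgroups of $G$, the normal subgroup $S\cap R_G(A)$ is a direct factor of $S$, say $S=(S\cap R_G(A))\times U$ with $U$ normal in $G$. Then $U\cap R_G(A)=1$, $UR_G(A)=SR_G(A)$, and $U\neq 1$ because $N\le S$ while $N\not\le R_G(A)$. It remains to check that $SR_G(A)=I_G(A)$, equivalently that $\soc(G/R_G(A))$ is exactly the image of $\soc(G)$ — that no new minimal normal subgroup is created when $R_G(A)$ is factored out. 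This last point is the main obstacle, and it is what dictates a judicious choice of $A$ (for instance, taking the chief factor $A$ so that $R_G(A)$ is minimal among all the $R_G(B)$); granting it, the remaining bookkeeping with the properties of crowns recorded above is routine.
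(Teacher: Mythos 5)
Note first that the paper offers no proof of this lemma; it is cited directly from Ballester-Bolinches and Ezquerro \cite{classes}, so there is no internal argument to compare yours against. What follows is an assessment of your proposal on its own terms.

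Your opening steps are sound: a minimal normal subgroup $N$ of $G$ is non-Frattini when $\frat(G)=1$, and the argument via a core-free maximal supplementing $N$ and the structure of primitive groups does give $N\cap R_G(N)=1$. You are also right that any candidate $U$ is forced to be a product of minimal normal subgroups $G$-equivalent to $A$ (being $G$-isomorphic to $I_G(A)/R_G(A)\cong A^{\delta_G(A)}$, which is completely reducible), so $U$ can only be the product $S$ of all such minimal normal subgroups; thus the whole lemma collapses to the single equality $SR_G(A)=I_G(A)$, and this is precisely what you flag as the remaining obstacle.

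That obstacle is a genuine gap, and the fix you sketch does not repair it. For an arbitrary minimal normal subgroup $A$ the equality fails, and choosing $A$ with $R_G(A)$ minimal among the $R_G(B)$ does not single out a working crown. Take $G=S_3\times C_2$, which has trivial Frattini subgroup and exactly two crown classes, represented by the minimal normal subgroups $C_3$ and $C_2$; one computes $R_G(C_3)=C_2$ and $R_G(C_2)=C_3$, which are incomparable minimal normal subgroups, so both are minimal among the $R_G(B)$. Yet for $A=C_2$ one has $I_G(C_2)=G$, $\delta_G(C_2)=2$, while the only minimal normal subgroup $G$-equivalent to $C_2$ is the direct factor, so $SR_G(C_2)=C_6\neq G$; indeed $C_3$ has no normal complement in $G$. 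So your selection principle does not determine a good crown, and in fact one of the two ``minimal'' choices fails. The step you describe as the main obstacle is the actual mathematical content of the lemma: a correct (and verified) criterion for choosing $A$ is missing, so the argument as written is not a proof and you would need to reconstruct the selection argument of \cite{classes} to close it.
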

	
	\begin{lemma}{\cite[Proposition 11]{crowns}}\label{sotto} Assume that $G$ is a finite  group with trivial Frattini subgroup and let $I_G(A), R_G(A), U$ be as in the statement of Lemma \ref{corona}. If $KU=KR_G(A)=G,$ then $K=G.$
	\end{lemma}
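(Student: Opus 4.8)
The plan is to prove that $R_G(A)\le K$, for then $K=KR_G(A)=G$ as required. Equivalently, setting $D:=K\cap I_G(A)$, I want to force $D=I_G(A)$. The preliminary observations are elementary. First, $D\trianglelefteq G$: indeed $I_G(A)\trianglelefteq G$ and $KI_G(A)\supseteq KU=G$, so $D=K\cap I_G(A)$ is normalised by both $K$ and $I_G(A)$, hence by $G$. Next, by the Dedekind modular law (using $R_G(A)\le I_G(A)$ and $U\le I_G(A)$),
\[
D\,R_G(A)=KR_G(A)\cap I_G(A)=I_G(A),\qquad D\,U=KU\cap I_G(A)=I_G(A).
\]
Since $I_G(A)=R_G(A)\times U$ by Lemma~\ref{corona}, these identities say precisely that, under the two projections of $I_G(A)$ onto its direct factors, $D$ maps onto $R_G(A)$ and onto $U$; that is, $D$ is a $G$-invariant subdirect subgroup of $R_G(A)\times U$.

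The substance of the argument is to exclude the possibility that $D$ is a \emph{proper} subdirect subgroup. If $D\ne I_G(A)$, then Goursat's lemma produces an isomorphism
\[
R_G(A)/(D\cap R_G(A))\ \cong\ U/(D\cap U)
\]
of two nontrivial groups, and this isomorphism is $G$-equivariant because $D$, $R_G(A)$ and $U$ are all normal in $G$. On the other hand the projection $I_G(A)\to I_G(A)/R_G(A)$ restricts to a $G$-isomorphism $U\cong I_G(A)/R_G(A)$, and $I_G(A)/R_G(A)$ is the $A$-crown of $G$, which by the properties of crowns recalled above is a direct product of $\delta_G(A)$ chief factors of $G$, each $G$-equivalent to $A$ and non-Frattini. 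Hence every $G$-composition factor of the nontrivial section $U/(D\cap U)$ is a non-Frattini chief factor $G$-equivalent to $A$; transporting one of these through the Goursat isomorphism shows that $R_G(A)/(D\cap R_G(A))$, and hence $R_G(A)$ itself, contains — strictly below $R_G(A)$ — a non-Frattini chief factor of $G$ that is $G$-equivalent to $A$. Now refine $1\le D\cap R_G(A)\le R_G(A)\le I_G(A)\le G$ to a chief series of $G$: it exhibits the $\delta_G(A)$ crown factors between $R_G(A)$ and $I_G(A)$ together with at least one further non-Frattini chief factor $G$-equivalent to $A$ below $R_G(A)$, i.e.\ at least $\delta_G(A)+1$ of them, contradicting the fact that the number of non-Frattini chief factors $G$-equivalent to $A$ in any chief series of $G$ equals $\delta_G(A)$. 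Therefore $D=I_G(A)$, so $R_G(A)\le K$ and $K=G$.

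The only genuinely non-elementary ingredient here is the crown machinery quoted in the excerpt — in particular the fact that the $\delta_G(A)$ chief factors making up $I_G(A)/R_G(A)$ are non-Frattini and account for \emph{all} the non-Frattini chief factors of $G$ that are $G$-equivalent to $A$ (this is automatic when $A$ is non-abelian, and for abelian $A$ it is a standard property of the crown-based power $G/R_G(A)\cong (L_A)_{\delta_G(A)}$). Everything else is Dedekind's law plus a $G$-equivariant application of Goursat's lemma; the main thing to get right is to keep track of the $G$-action throughout so that the Goursat correspondence really transfers chief factors of $G$ from one side to the other. Note in particular that no maximality or induction argument on $K$ is needed: the normality of $D=K\cap I_G(A)$ in $G$ is what makes the direct approach work.
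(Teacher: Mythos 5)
The argument hinges on the unjustified claim that $D:=K\cap I_G(A)$ is normal in $G$. The stated reason — that $D$ is normalised by both $K$ and $I_G(A)$ — fails for $I_G(A)$: for $x\in I_G(A)$ one has $D^x=K^x\cap I_G(A)$, and nothing forces this to equal $K\cap I_G(A)$. In fact, when the chief factor $A$ is nonabelian the normality of $D$ is \emph{equivalent} to the conclusion you are trying to prove. Write $I_G(A)=R_G(A)\times U$ and use $G=KU$, so that $D\trianglelefteq G$ iff $D^u=D$ for all $u\in U$. Conjugation by $u\in U$ fixes the $R_G(A)$-coordinate pointwise and acts on the $U$-coordinate, and a short computation with the Goursat correspondence shows that $D^u=D$ for all $u\in U$ iff $[U,U]\leq D\cap U$. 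Since $U\cong A^{\delta_G(A)}$ is perfect when $A$ is nonabelian, this forces $U\leq D\leq K$, i.e.\ $K=KU=G$. Thus, under the reductio hypothesis $K\neq G$ with $A$ nonabelian, $D$ is \emph{not} normal in $G$; the Goursat isomorphism is then only $K$-equivariant, which does not allow you to transport $G$-chief factors from $U/(D\cap U)$ to $R_G(A)/(D\cap R_G(A))$. The nonabelian case is precisely the one invoked in the proof of Lemma~\ref{cruciale}, so it cannot be set aside.

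There is also a secondary gap at the last step: being non-Frattini is not an invariant of the $G$-equivalence class of a chief factor (it depends on the ambient quotient of $G$, not just on the $G$-group structure of the section — compare $C_p/1$ inside $C_p\times C_p$ with $C_p/1$ inside $C_{p^2}$). So even granting the $G$-isomorphism $R_G(A)/(D\cap R_G(A))\cong U/(D\cap U)$, you would still owe an argument that the new chief factor found below $R_G(A)$ is non-Frattini. In the abelian case both problems can be repaired ($U$ abelian makes $D$ normal, and since $K\cap R_G(A)=D\cap R_G(A)$ while $KR_G(A)=G$, the section $R_G(A)/(D\cap R_G(A))$ is complemented by $K/(D\cap R_G(A))$, from which one extracts a complemented, hence non-Frattini, chief factor), but the nonabelian case needs a different idea. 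A more robust route, and the one I expect \cite{crowns} follows, is to pass to a maximal subgroup $M\supseteq K$, so that $MU=MR_G(A)=G$, and exploit the structure of the primitive quotient $G/\core_G(M)$; that avoids having to make $K\cap I_G(A)$ normal in $G$.
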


\section{Crown-based powers with abelian socle}

In this section we will assume that $H$ is a  finite group acting irreducibly and faithfully on an elementary abelian  $p$-group $V$. The semidirect product $L=V\rtimes H$ is a monolithic primitive group. 
For a positive integer $u$ we consider the crown-based power $L_u$: we have that $L_u$ is isomorphic to the semidirect product $G = V^u \rtimes H$, where we assume that  the action of $H$ is diagonal on $V^u$, that is, $H$ acts in the
same way on each of the $u$ direct factors. We assume that
$h_1,\dots,h_d$ (invariably) generate $H$ and we look for conditions ensuring the existence of $d$-elements $w_1,\dots,w_d\in V^u$ such that $h_1w_1,\dots,h_dw_d$ (invariably) generate $G.$ The case when $H=1$ is trivial:
$V\cong C_p$ is a cyclic group of  prime order  and $G=C_p^u$ can be generated by $d$ elements $w_1,\dots,w_d$ if and only if $u\leq d.$ So for the remaining
part of this section we will assume $H\neq 1.$
We will denote by $\der(H,V)$ the set of the derivations from
$H$ to $V$ (i.e. the maps $\delta: H\to V$ with the property that $\delta(h_1h_2)=\delta(h_1)^{h_2}+\delta(h_2)$ for every $h_1,h_2\in H$). If $v\in V$ then the map $\delta_v:H\to V$
defined by $\delta_v(h)=[h,v]$ is a derivation. The set  $\ider(H,V)=\{\delta_v\mid v \in V\}$ of the inner derivations from $H$ to $V$ is a subgroup of $\der(V,H)$ and the factor group $\h(H,V)=\der(H,V)/\ider(H,V)$ is the first cohomology group of $H$
with coefficients in $V.$

The following is a generalization of a similar partial result (\cite[Proposition 2.1]{cl}),
proved in the particular case when $H$ is soluble, or, more in general, when 
$\h(H,V)=0.$
\begin{prop}\label{crit} Suppose that $H=\langle h_1, \dots, h_d\rangle$.
 Let $w_i=(w_{i,1},\dots,w_{i,u})\in V^u$ with $1\leq i\leq d.$ The following are equivalent.
\begin{enumerate}
\item $G \neq \langle h_1w_1,\dots,h_dw_d\rangle$;
\item there exist $\lambda_1,\dots,\lambda_u \in F=\mathrm{End}_{H}(V)$ and a derivation $\delta \in \der(H,V)$
with $(\lambda_1,\dots,\lambda_u, \delta) \not= (0, \ldots,0,0)$ such
that $\sum_{1\leq j\leq u} \lambda_j w_{i,j} = \delta(h_i)$ for each $i\in\{1,\dots,d\}.$
\end{enumerate}
\end{prop}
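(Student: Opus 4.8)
The plan is to translate the failure of invariable... wait, actually this proposition is about ordinary generation, not invariable generation, so the plan is to translate the failure of $\langle h_1w_1,\dots,h_dw_d\rangle = G$ into the existence of a proper subgroup of $G = V^u \rtimes H$ projecting onto $H$, and then describe such subgroups linear-algebraically. Since $\langle h_1w_1,\dots,h_dw_d\rangle$ projects onto $\langle h_1,\dots,h_d\rangle = H$, it is a proper subgroup if and only if it is contained in a complement-like subgroup, i.e. in a subgroup $K$ with $KV^u = G$ and $K \cap V^u \neq V^u$. The first step is to recall the standard correspondence: complements to $V^u$ in $G$ are, up to $V^u$-conjugacy, parametrised by $\mathrm{Der}(H, V^u)$, and more generally subgroups $K$ with $KV^u=G$ correspond to pairs (a derivation $\partial \in \mathrm{Der}(H,V^u)$, an $H$-submodule $W = K\cap V^u$ of $V^u$), with $K = \{h\,\partial(h)\,w : h\in H,\ w\in W\}$ after suitable normalisation. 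Since $V$ is irreducible, by Schur $F = \mathrm{End}_H(V)$ is a field and $V^u$ is a semisimple $FH$-module; hence proper $H$-submodules of $V^u$ are exactly kernels of nonzero $FH$-homomorphisms $V^u \to V$, and such a homomorphism is given by a tuple $(\lambda_1,\dots,\lambda_u) \in F^u \setminus \{0\}$ acting by $(v_1,\dots,v_u)\mapsto \sum_j \lambda_j v_j$.

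Next I would assemble these two ingredients. For (2) $\Rightarrow$ (1): given $(\lambda_1,\dots,\lambda_u,\delta)$ as in (2), consider the map $\mu: G \to V$ and check that $\{(v_1,\dots,v_u)\mapsto \sum_j\lambda_j v_j\}$ composed appropriately with the derivation data carves out a proper subgroup $K$ of index $|V|/|\ker|$ (or at least a proper subgroup) containing each $h_iw_i$; the computation $\sum_j \lambda_j w_{i,j} = \delta(h_i)$ is precisely the statement that $h_iw_i$ lies in the subgroup $K_{\lambda,\delta} = \{h\,v : h\in H,\ v=(v_1,\dots,v_u),\ \sum_j\lambda_j v_j = \delta(h)\}$, and one verifies $K_{\lambda,\delta}$ is a subgroup using the cocycle identity for $\delta$ and $F$-linearity, and that it is proper because $(\lambda_1,\dots,\lambda_u,\delta)\neq 0$ forces either $W := \{v : \sum\lambda_j v_j = 0\}$ to be proper in $V^u$ or $\delta$ to represent a nontrivial class obstructing conjugacy into $H$ — in either case $|K_{\lambda,\delta}| < |G|$. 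For (1) $\Rightarrow$ (2): if $K := \langle h_1w_1,\dots,h_dw_d\rangle \neq G$, then since $KV^u = G$ we may pick a maximal subgroup $M$ with $K \le M$, $MV^u = G$, so $M \cap V^u$ is a maximal $H$-submodule $W$ of $V^u$; semisimplicity gives an $FH$-projection $\pi = (\lambda_1,\dots,\lambda_u): V^u \to V^u/W \cong V$ with kernel $W$. Conjugating $M$ by a suitable element of $V^u$ (adjusting the $w_{i,j}$ accordingly — this is where one must be careful that the $\lambda_j$ and hence the identity in (2) is stated for the given $w_{i,j}$, so instead one reads off $\delta$ from $M$ directly), the image of $M$ under $G \to V^u \rtimes H \to (V^u/W)\rtimes H = V\rtimes H$ is a proper subgroup $\overline M$ of $V\rtimes H$ with $\overline M \cap V = 1$, hence a complement, hence of the form $\{h\,\delta(h): h\in H\}$ for some $\delta \in \mathrm{Der}(H,V)$; the condition $h_iw_i \in M$ then reads $\sum_j \lambda_j w_{i,j} = \delta(h_i)$, and $(\lambda_1,\dots,\lambda_u,\delta)\neq 0$ since $(\lambda_1,\dots,\lambda_u)\neq 0$.

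The main obstacle is getting the bookkeeping between the derivation $\delta$ and the submodule $W$ exactly right without an unwanted $V^u$-conjugation corrupting the prescribed coordinates $w_{i,j}$. The clean way is: do not decompose $M$ as (complement)$\,\ltimes\,$(submodule) in two separate steps; instead, directly observe that the condition "$h_1w_1,\dots,h_dw_d$ all lie in a common subgroup $K$ with $KV^u=G$ and $K\cap V^u \le W$ for some maximal $H$-submodule $W$" is equivalent, after applying the projection $V^u \to V^u/W \cong V$, to "$h_1\bar w_1,\dots,h_d\bar w_d$ fail to generate $V\rtimes H$", where $\bar w_i = \sum_j\lambda_j w_{i,j}$; and the latter, since $V$ is irreducible, holds iff all $\bar w_i$ lie in a single conjugate of $H$, i.e. iff there is $\delta\in\mathrm{Der}(H,V)$ with $\bar w_i = \delta(h_i)$ for all $i$ (using $H=\langle h_1,\dots,h_d\rangle$ to pin down the conjugating vector). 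Running this equivalence in both directions, and noting that a nonzero $(\lambda_1,\dots,\lambda_u,\delta)$ with all $\lambda_j = 0$ forces $\delta \neq 0$ hence a genuine nontrivial cohomological obstruction while a nonzero $(\lambda_1,\dots,\lambda_u)$ gives a genuine proper submodule, yields both implications; I would also record the degenerate subcase $\mathrm{H}^1(H,V)=0$ to make contact with \cite[Proposition 2.1]{cl}.
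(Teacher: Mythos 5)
Your proof is correct, but the route differs genuinely from the paper's. The paper proves $(1)\Rightarrow(2)$ by induction on $u$: it truncates $w_i$ to $z_i=h_i(w_{i,1},\dots,w_{i,u-1},0)$, applies induction if $\langle z_1,\dots,z_d\rangle$ is already proper, and otherwise observes that $Z=\langle z_1,\dots,z_d\rangle$ is a complement to $V_u$ in $G$, so that a derivation $\delta^*\in\der(Z,V_u)$ hits the missing coordinates $w_{i,u}$ and can be decomposed via Propositions~2.7 and~2.10 of \cite{AG} into a derivation $\delta\in\der(H,V)$ plus $F$-linear contributions $\lambda_j$ from the $V$-coordinates. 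Your argument instead skips the induction: you pick a maximal subgroup $M\supseteq K$ with $MV^u=G$, show $W=M\cap V^u$ is a maximal $H$-submodule of $V^u$, use semisimplicity of $V^u$ (Schur gives $F$ a field and $\Hom_{FH}(V^u,V)\cong F^u$) to write $W=\ker(\lambda_1,\dots,\lambda_u)$, and then observe that $M/W$ is a complement to $V^u/W\cong V$ in $V\rtimes H$ and therefore is $\{h\delta(h):h\in H\}$ for a unique $\delta\in\der(H,V)$, from which $\sum_j\lambda_jw_{i,j}=\delta(h_i)$ falls out directly. Your ``clean way'' paragraph correctly identifies that the $V^u$-conjugation worry is spurious once one argues through the quotient $V^u\to V^u/W$, so no bookkeeping correction is needed. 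The $(2)\Rightarrow(1)$ direction is the same in both: the set $\{h(v_1,\dots,v_u):\sum_j\lambda_jv_j=\delta(h)\}$ is a proper subgroup containing the $h_iw_i$. One small imprecision: the phrase ``all $\bar w_i$ lie in a single conjugate of $H$'' should read ``a single complement to $V$'', since complements correspond to all of $\der(H,V)$, not only to $\ider(H,V)$; your subsequent ``i.e.\ there is $\delta$ with $\bar w_i=\delta(h_i)$'' is the correct statement. Also note that the subcase $\lambda_1=\dots=\lambda_u=0$, $\delta\neq 0$ cannot actually occur: $\delta(h_i)=0$ for all $i$ together with $H=\langle h_1,\dots,h_d\rangle$ forces $\delta=0$ by the cocycle identity, so there is no genuine cohomological obstruction hiding there. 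Overall your maximal-subgroup argument is somewhat more structural than the paper's induction and makes the role of semisimplicity of $V^u$ explicit, while the paper's route reduces everything to the single-copy complement theory in \cite{AG}; both are valid.
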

\begin{proof}
Let  $K=\langle h_1w_1,\dots,h_dw_d\rangle$. First we  prove, by induction on $u$, that if $K\neq G$
then $(2)$ holds. Let $z_i=h_i(w_{i,1},\dots,w_{i,u-1},0)$ and let  $Z=\langle z_1, \dots, z_d
\rangle$. If $Z \not\cong V^{u-1}H$, then, by induction,
there exist $\lambda_1,\dots,\lambda_{u-1} \in F$ and $\delta \in \der(H,V)$
with $(\lambda_{1}, \ldots , \lambda_{u-1},\delta) \not=
(0,\ldots,0,0)$ such that $\sum_{1\leq j\leq u-1} \lambda_j w_{i,j} = \delta(h_i)$ for each $i\in\{1,\dots,d\}.$ In this case
$\lambda_1,\dots,\lambda_{u-1},0$ and $\delta$ are the requested
elements.

 So we may assume $Z \cong V^{u-1}H$. Set
$V_u=\{(0,\dots,0,v)\mid v \in V\}$. We have $ZV_u = KV_u=G$
and $Z \neq G$; this implies that $Z$ is a complement of $V_u$ in
$G$ and therefore there exists $\delta^* \in \der(Z,V_u)$
such that $\delta^*(z_i)=w_{i,u}$ for each $i\in\{1,\dots,d\}.$ By Propositions 2.7 and 2.10 of \cite{AG},  there exist $\delta \in \der(H,V)$ and $\lambda_1,\dots,\lambda_{u-1}
\in F$ such that for each $h(v_1,\dots,v_{u-1},0) \in Z$ we
have
$$ \delta^*(h(v_1,\dots,v_{u-1},0))=\delta(h)+\lambda_1v_1 +
\dots+ \lambda_{u-1}v_{u-1}.$$
 In particular $-\sum_{1\leq j\leq u-1} \lambda_j w_{i,j}+w_{i,u}=\delta(h_i)$ for each $i\in\{1,\dots,d\},$
 hence  $(2)$ holds.

Conversely, if (2) holds then $\langle h(v_1,\dots,v_u)\mid
\delta(h)=\lambda_1v_1+\dots +\lambda_uv_u\rangle$ is a proper subgroup
of $G$ containing $K$. 
\end{proof}

Notice that $V,$ $\der(H,V)$
and $\h(H,V)$ are vector spaces over $F=\mathrm{End}_{H}(V)$. Let
$n:=\dim_FV=\dim_F\ider(H,V)$ and $m:=\dim_F\h(H,V).$ Clearly, we have
$\dim_F\der(H,V)=n+m.$ 

 Let $\pi_i:V^u \mapsto V$ be the canonical projection on the $i$-th component:
 $$\pi_i(v_1, \dots , v_u)=v_i.$$
Let  $w_i=(w_{i,1},\dots,w_{i,u})\in V^u$, for $i\in\{1, \dots ,d\}$, and consider the vectors
$$r_j= (\pi_j(w_1), \dots , \pi_j(w_d))=(w_{1,j}, \dots, w_{d,j})  \in V^d \text{ for }j\in\{1, \dots ,u\}.$$
%Let $w_i=(w_{i,1},\dots,w_{i,u})\in V^u$ and denote the $u \times n$ matrix
%with rows ${w_{i,1}},\dots,{w_{i,u}}$  by $A_i$.
Proposition \ref{crit} says that  the elements $h_1w_1,\dots,
h_dw_d$ generate a proper subgroup of $G$ if and only
if
there exists a non-zero vector $(\lambda_{1}, \ldots ,
\lambda_{u}, \delta)$ in $F^{u} \times \der(H,V)$ such that
$$
\sum_{1\leq j\leq u} \lambda_j r_j = \big(\delta(h_1), \dots , \delta(h_d)\big).
$$
Equivalently,  $\langle h_1w_1,\dots,h_dw_d \rangle=
G$ if and only if
$r_1, \dots , r_u$ in $V^d$  are linearly independent modulo the vector space
$$D=\{ \big(\delta(h_1), \dots , \delta(h_d)\big)\in V^d \mid w\in V\}.$$
Since $G=\langle h_1,\dots,h_d\rangle,$ the map $\der(H,V)\to D$
defined via $\delta\mapsto (\delta(h_1)\dotsm\delta(h_d))$
is an $F$-isomorphism. In particular $\dim_F(D)=\dim_F(\der(H,V))=n+m$ and so we conclude that there exist elements $w_1,\dots,w_d$
in $V^u$ such that $\langle h_1w_1,\dots,h_dw_d \rangle=
G$ if and only if
$u \leq \dim_F (V^d)-\dim_F(D)=n(d-1)-m$.

\

We now discuss the same question in the case of invariable generation, generalizing
to an arbitrary irreducible $H$-module $V$ a partial result ({\cite[Proposition 8]{igdl}}) proved under the  hypothesis $\h(H,V)=0.$

\begin{prop}\label{matrici}
Suppose that $h_1,\dots,h_d$ invariably generate $H.$
 Let $w_1,\dots,w_d\in V^u$ with
$w_i=(w_{i,1},\dots,w_{i,u})$. For $j\in \{1,\dots,u\}$, consider the vectors
$$r_j=\big(\pi_j(w_1), \dots , \pi_j(w_d)\big)=(w_{1,j}, \dots, w_{d,j})\in V^{d}.$$
Then $h_1w_1, h_2w_2,\dots,h_d w_d$ invariably generate
 $V^u\rtimes H$ if and only if the vectors $r_1,\dots,r_u$ are linearly independent modulo $D+W$ where
 $$\begin{aligned}D&=\{ \big(\delta(h_1), \dots , \delta(h_d)\big)\in V^d \mid \delta\in \der(H,V)\},\\ W&=\{(u_1,\dots,u_d)\in V^{d}\mid  u_i \in [h_i,V], \ i=1, \dots , d\}.\end{aligned}$$
In particular, there exist  elements $w_1,\dots,w_d\in V^u$ such that $h_1w_1, h_2w_2,\dots,h_d w_d$ invariably generate
  $V^u\rtimes H$ if and only if
$u\leq nd-\dim_F(D+W).$
\end{prop}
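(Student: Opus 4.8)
The plan is to reduce the invariable generation statement to a linear-algebra condition by the same kind of argument used for ordinary generation before Proposition \ref{matrici}, but with the extra subtlety that ``up to conjugacy'' introduces the subspace $W$ coming from the commutator subspaces $[h_i,V]$. First I would observe that invariable generation of $V^u\rtimes H$ by $h_1w_1,\dots,h_dw_d$ means that $\langle (h_1w_1)^{x_1},\dots,(h_dw_d)^{x_d}\rangle = V^u\rtimes H$ for every choice of $x_i\in V^u\rtimes H$. Since $h_1,\dots,h_d$ already invariably generate $H$, conjugating $h_iw_i$ by an element of $H$ only permutes the $h_i$ within their $H$-conjugacy class and transports $w_i$ accordingly, so it is harmless; the genuinely new effect comes from conjugating by elements of $V^u$. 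Conjugating $h_iw_i$ by $t_i\in V^u$ replaces $w_i$ by $w_i+[h_i,t_i]$ (working additively in $V^u$, with $[h_i,\cdot]$ acting componentwise). Thus, writing $t_i=(t_{i,1},\dots,t_{i,u})$, the $j$-th ``row'' vector $r_j$ gets replaced by $r_j + ([h_1,t_{1,j}],\dots,[h_d,t_{d,j}])$, and the correction term ranges precisely over $W$ as $t_i$ ranges over $V^u$.

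Next I would invoke Proposition \ref{crit} (more precisely its reformulation in terms of $r_1,\dots,r_u$ and the subspace $D$) for each conjugated tuple: $\langle (h_1w_1)^{x_1},\dots,(h_dw_d)^{x_d}\rangle = G$ holds if and only if the corresponding rows $r_1',\dots,r_u'$ are linearly independent modulo $D$. Combining this with the previous paragraph, $h_1w_1,\dots,h_dw_d$ invariably generate $G$ if and only if, for every choice of $W$-translates, the translated rows are independent modulo $D$ --- equivalently, $r_1,\dots,r_u$ are independent modulo $D$ even after adding an arbitrary element of $W$ to the span, i.e.\ $r_1,\dots,r_u$ are linearly independent modulo $D+W$. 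One direction is immediate: if they are independent modulo $D+W$ then every translate is independent modulo $D$. For the converse, if some $F$-linear combination $\sum_j\lambda_j r_j$ lies in $D+W$, say $\sum_j\lambda_j r_j = d_0 + \sum_j \mu_j\,([h_1,t_{1,j}],\dots,[h_d,t_{d,j}])$ with $d_0\in D$, I would rescale the $t_{i,j}$ by $\mu_j$ (using that $[h_i,\cdot]$ is $F$-linear and $F=\mathrm{End}_H(V)$) to realize the $W$-part as a single admissible conjugation, producing a choice of $x_i$ for which the conjugated rows fail to be independent modulo $D$, hence a failure of invariable generation. Care is needed here because $\lambda_j$ and $\mu_j$ are not the same scalars; the cleanest formulation is simply that ``$r_j$ independent modulo $D$ for all $W$-translates'' is literally the definition of ``independent modulo $D+W$'', since $W$ is the full set of translations available.

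Finally, for the ``in particular'' clause I would count dimensions exactly as in the ordinary-generation case: there exist $w_1,\dots,w_d\in V^u$ with the desired property if and only if one can choose $u$ vectors in $V^d$ that are linearly independent modulo $D+W$, which is possible precisely when $u\le \dim_F(V^d) - \dim_F(D+W) = nd - \dim_F(D+W)$. The main obstacle I anticipate is the bookkeeping in the converse direction of the equivalence: one must verify that the ``per-coordinate, per-generator'' freedom in choosing the conjugating elements $t_{i,j}\in V$ assembles exactly into the subspace $W=\{(u_1,\dots,u_d)\mid u_i\in[h_i,V]\}$ applied uniformly across the $u$ coordinates of each row, and that the $F$-linearity of $[h_i,-]$ lets arbitrary $F$-combinations of such translations still be realized by honest conjugations. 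Once this identification of the available translations with $W$ is pinned down, the rest is the same linear algebra as in the non-invariable case together with the $F$-isomorphism $\der(H,V)\cong D$ already noted.
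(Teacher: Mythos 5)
Your overall strategy matches the paper's: reduce to the linear-algebra criterion of Proposition \ref{crit}, identify the extra translations coming from $V^u$-conjugation with the subspace $W$, and then count dimensions. However, there is a genuine gap in the way you handle conjugation by elements of $H$.

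You assert that conjugating $h_iw_i$ by $\sigma_i\in H$ ``only permutes the $h_i$ within their conjugacy class and transports $w_i$ accordingly, so it is harmless,'' and then you apply Proposition \ref{crit} to the conjugated tuple claiming the criterion is independence modulo the \emph{same} subspace $D$. That is not what Proposition \ref{crit} gives: for the tuple $h_1^{\sigma_1},\dots,h_d^{\sigma_d}$ the relevant subspace is
$D^{*}=\{(\delta(h_1^{\sigma_1}),\dots,\delta(h_d^{\sigma_d}))\mid\delta\in\der(H,V)\}$,
and the transported rows are $f_\sigma(r_j)$ with $f_\sigma(v_1,\dots,v_d)=(v_1^{\sigma_1},\dots,v_d^{\sigma_d})$. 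Because the $\sigma_i$ are chosen independently for each coordinate, $f_\sigma$ is not a $G$-equivariant map, and one does \emph{not} have $D^{*}=f_\sigma(D)$. So the reduction to ``only $V^u$-conjugation matters'' is not automatic. The paper closes this gap by the explicit derivation identity
$\delta(h^{x})=\bigl(\delta(h)-[h,\delta(x^{-1})]\bigr)^{x}$,
which shows that $f_\sigma^{-1}(D^{*})=\{(\delta(h_1)-[h_1,\delta(\sigma_1^{-1})],\dots)\mid\delta\}$, a subspace that differs from $D$ only by elements of $W$. It is precisely this computation that makes the $H$-conjugations ``harmless'': $f_\sigma^{-1}(D^{*})+W=D+W$, so the combined subspace $D+W$ is stable under $f_\sigma$ even though $D$ alone is not. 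Without recording this, your argument conflates $D^*$ with $D$ and the ``if'' direction (independence modulo $D+W$ implies invariable generation for \emph{all} $g_i\in G$) is not proved.

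The remaining parts of your proposal are sound and agree with the paper: the $V^u$-conjugations translate the rows by exactly the elements of $W$; the $F$-linearity of $[h_i,-]$ (because $F=\End_H(V)$) lets you absorb a $W$-component of an $F$-linear relation into a single admissible conjugation; and the ``in particular'' clause follows from the dimension count $\dim_F V^d-\dim_F(D+W)=nd-\dim_F(D+W)$. Once you insert the identity for $\delta(h^x)$ and verify $f_\sigma^{-1}(D^{*})\subseteq D+W$, your proof becomes essentially the paper's.
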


\begin{proof}
Let $g_i=y_ix_i$
 with $x_i\in H$ and $y_i=(y_{i,1},\dots,y_{i,u})\in V^u$ for $i\in\{1,\dots,d\}$
and let $X_{g_1,\dots, g_d}=\langle (h_1w_1)^{g_1},\dots, (h_dw_d)^{g_d}\rangle.$
We have
$$(h_iw_i)^{g_i}=(h_i^{y_i}w_i)^{x_i}=
h_i^{x_i}([h_i,y_i]+w_i)^{x_i}=h_i^{x_i}z_i$$
where
 $z_i=([h_i,y_i]+w_i)^{x_i}\in V^u$.
 Then $X_{g_1,\dots, g_d}=G$ if and only if the vectors 
 $$\big(\pi_j(z_1), \dots , \pi_j(z_d)\big)=
\big(   ([h_1,y_{1,j}]+w_{1,j})^{x_1}, \dots , ([h_d,y_{d,j}]+w_{d,j})^{x_d}\big) \in V^d,
 $$
 for $j\in \{1,\dots,u\}$,  
are linearly independent modulo the subspace
$$\begin{aligned}
{D}^*&=\{ \big(\delta(h_1^{x_1}), \dots , \delta(h_d^{x_d})\big)\in V^d \mid \delta\in \der(H,V)\}\\
&= \left\{\left(\left(\delta(h_1)-[h_1,\delta(x_1^{-1})]\right)^{x_1}\!\!\!\!\!,\dots , \left(\delta(h_d)-[h_d,\delta(x_d^{-1})]\right) ^{x_d} \right)\!\in\! V^d \mid \delta\in \der(H,V)\right\}
\end{aligned}$$
(we have indeed that $\delta(h^x)=\delta(x^{-1}hx)=\delta(x^{-1}h)^x+\delta(x)=
(\delta(x^{-1}h)+\delta(x)^{x^{-1}})^x=
(\delta(x^{-1})^h+\delta(h)-\delta(x^{-1}))^x)=(\delta(h)-[h,\delta(x^{-1})])^{x}$).

Note that the map $f_{(x_1, \dots, x_d)}:V^d \mapsto V^d$ defined by
 $$f_{(x_1, \dots, x_d)}(v_1, \dots , v_d)=(v_1^{x_1}, \dots , v_d^{x_d})$$ is an isomorphism.
Therefore $X_{g_1,\dots, g_d}=G$ if and only if  the vectors 
% the previous condition is equivalent to have that the vectors
 $$\big(   [h_1,y_{1,j}]+w_{1,j}, \dots , [h_d,y_{d,j}]+w_{d,j}\big)=r_j+\big( [h_1,y_{1,j}],  \dots , [h_d,y_{d,j}]\big),
 $$
 for $j=1,\dots,u,$
 are linearly independent modulo
  the subspace
$$\left\{\left(\left(\delta(h_1)-[h_1,\delta(x_1^{-1})]\right) , \dots , \left(\delta(h_d)-[h_d,\delta(x_d^{-1})]\right) \right)\in V^d \mid \delta\in \der(H,V)\right\}.$$
Since this condition has to hold for every choice of  $y_i \in V^u$ and $x_j \in H$,  this means that
 the elements 
 ${r_1}, \dots , {r_u}$ have to be
linearly independent modulo
  the subspace
$D+W,$ as required.
\end{proof}

\begin{lemma}\label{dimen}In the situation described  in Proposition \ref{matrici}, and using the same notations, 
	we have that $$nd-\dim_F(D+W)\geq \!\!
	\sum_{1\leq i\leq d}\!\dim_FC_V(h_i)-m,$$
	with $m=\dim_F \h(H,V).$
\end{lemma}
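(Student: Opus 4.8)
The plan is to compute $\dim_F(D+W)$ through the Grassmann formula $\dim_F(D+W)=\dim_F D+\dim_F W-\dim_F(D\cap W)$ and to control each of the three terms, the only nontrivial point being a lower bound for $\dim_F(D\cap W)$.

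First I would record that $\dim_F D=n+m$. Indeed, since $h_1,\dots,h_d$ invariably generate $H$ they in particular generate $H$ (take $x_1=\dots=x_d=1$), so the argument given just before Proposition~\ref{matrici} applies verbatim: the map $\der(H,V)\to D$, $\delta\mapsto(\delta(h_1),\dots,\delta(h_d))$, is an $F$-isomorphism, whence $\dim_F D=\dim_F\der(H,V)=n+m$. Next I would compute $\dim_F W$. As a subspace of $V^d$ we have $W=[h_1,V]\times\cdots\times[h_d,V]$, so $\dim_F W=\sum_{1\le i\le d}\dim_F[h_i,V]$. For each $i$ the map $V\to V$ sending $v$ to $[h_i,v]=v^{h_i}-v$ is $F$-linear (the $H$-action commutes with $F=\End_H(V)$ by definition of the endomorphism ring), has kernel $C_V(h_i)$ and image $[h_i,V]$; hence $\dim_F[h_i,V]=n-\dim_F C_V(h_i)$ and therefore $\dim_F W=nd-\sum_{1\le i\le d}\dim_F C_V(h_i)$.

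The heart of the argument is the inequality $\dim_F(D\cap W)\ge n$. For this I would note that every inner derivation $\delta_v$ with $v\in V$ satisfies $\delta_v(h_i)=[h_i,v]\in[h_i,V]$ for each $i$, so the image of $\ider(H,V)$ under the map $\delta\mapsto(\delta(h_1),\dots,\delta(h_d))$ is contained in $W$; it is contained in $D$ by the very definition of $D$. Since this map is injective on $\der(H,V)$ (being an $F$-isomorphism onto $D$) and $\dim_F\ider(H,V)=n$, its restriction to $\ider(H,V)$ has image of dimension $n$ inside $D\cap W$, giving $\dim_F(D\cap W)\ge n$.

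Putting these together,
$$nd-\dim_F(D+W)=nd-(n+m)-\Big(nd-\sum_{1\le i\le d}\dim_F C_V(h_i)\Big)+\dim_F(D\cap W)\ge\sum_{1\le i\le d}\dim_F C_V(h_i)-m,$$
as claimed. I do not expect a genuine obstacle here: the computation is routine linear algebra once one observes that $\ider(H,V)$ sits inside $D\cap W$. The only point requiring a little care is checking that all the maps involved ($v\mapsto v^{h_i}-v$ and $\delta\mapsto(\delta(h_1),\dots,\delta(h_d))$) are $F$-linear, which is immediate from the fact that $F=\End_H(V)$ commutes with the action of $H$.
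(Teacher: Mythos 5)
Your proof is correct and is essentially the paper's argument: both hinge on the observation that the image of $\ider(H,V)$ under $\delta\mapsto(\delta(h_1),\dots,\delta(h_d))$ lands inside $D\cap W$, together with the same computation of $\dim_F W$. The paper phrases the final step via $\dim_F\big((D+W)/W\big)=\dim_F\big(D/(D\cap W)\big)\le\dim_F(D/I)=m$ rather than the Grassmann formula you use, but these are the same bookkeeping.
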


\begin{proof}
	Firstly, notice that $$\dim_F W=\sum_{1\leq i \leq d}\dim_F[h_i,V]=\sum_{1\leq i \leq d}\left(n-\dim_FC_V(h_i)\right)=nd-\sum_{1\leq i\leq d}\dim_FC_V(h_i).$$
	Moreover $D\cap W$ contains
	$I=\{ \big(\delta(h_1), \dots , \delta(h_d)\big)\in V^d \mid \delta\in \ider(H,V)\},$ which is $F$-isomorphic to $\ider(H,V),$
	and consequently
	$$\begin{aligned}\dim_F(D+W)-\dim_F(W)&=\dim_F((D+W)/W)=\dim_F(D/(D\cap W))\\&\leq\dim_F D/I = \dim_F (\der(H,V)/\ider(H,V))\\&=\dim_F \h(H,V)=m.\end{aligned}$$
We conclude
	$$\begin{aligned}\dim_F(D+W)&\leq \dim_FW+\dim_F \h(H,V)\\ &\leq nd-\sum_{1\leq i\leq d}\dim_FC_V(h_i)+m.\qedhere
	\end{aligned}$$
\end{proof}

\section{First cohomology groups for finite groups}

For all  this section we will assume that $H$ is a finite group, $F$ is a field of finite characteristic and  $V$ is a faithful and absolutely irreducible $FH$-module. Moreover  let $n=\dim_FV,$  $m=\dim_{F} \h(H,V)$. 

\

In the proof of our main result we will need a good bound upper bound for $m.$
The following result is available (see \cite[Theorem A]{AG}, \cite[Theorem 1]{guho}):

\begin{prop}$m\leq n/2\leq n-1.$
\end{prop}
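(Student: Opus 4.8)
The statement to prove is $m \le n/2 \le n-1$, where $m = \dim_F \h(H,V)$ for a faithful absolutely irreducible $FH$-module $V$ of dimension $n$. Since $V$ is faithful and irreducible and $H \neq 1$ (the case $H = 1$ forces $V$ cyclic and $\h = 0$), we have $n \ge 1$; and $n = 1$ forces $H$ abelian acting faithfully and irreducibly, hence $H$ is cyclic and $\h(H,V) = 0$, so $n/2 \le n-1$ holds vacuously in the only borderline case. The real content is $m \le n/2$, and the plan is simply to invoke the cited results: Theorem~A of \cite{AG} and Theorem~1 of \cite{guho} assert precisely that $|\h(G,V)| \le |V|^{1/2}$ for $V$ a faithful irreducible module over a finite field, which translated into $F$-dimensions (using that both $V$ and $\h(H,V)$ are $F$-spaces and that absolute irreducibility gives $F = \mathrm{End}_H(V)$, so dimensions over $F$ are the relevant invariant) reads $m \le n/2$.

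The inequality $n/2 \le n-1$ is just elementary arithmetic: it is equivalent to $n \ge 2$. So after disposing of the $n = 1$ case as above, the chain $m \le n/2 \le n-1$ follows, and in particular $m \le n-1$ with strict inequality $m < n$ always holding (which is the form actually needed: that the cohomology is "smaller" than the module, matching the remark in the introduction that $|\h(G,V)| \le \sqrt{|V|} < |V|$).

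The only genuine subtlety — and the reason one cannot give a self-contained elementary proof — is the bound $|\h(G,V)| \le \sqrt{|V|}$ itself, which depends on the classification of finite simple groups; this is exactly what \cite{AG} and \cite{guho} supply, so I would state the proposition with a one-line proof citing those papers and the observation that absolute irreducibility lets one pass freely between module orders and $F$-dimensions. No further obstacle arises: the proposition is a packaging of known deep results into the dimensional form convenient for the subsequent sections, and the main work lies entirely in the references rather than in any new argument here.
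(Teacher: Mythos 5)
The paper gives no proof of its own for this proposition: it simply cites \cite[Theorem A]{AG} and \cite[Theorem 1]{guho} for the bound $m \le n/2$. Your plan of invoking those references, together with the observation that absolute irreducibility gives $F = \mathrm{End}_H(V)$ so that orders and $F$-dimensions correspond, is exactly what the paper does, so the approach matches.

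One small slip in your write-up: the claim that ``$n/2 \le n-1$ holds vacuously'' when $n=1$ is not correct. That inequality concerns $n$ alone and is simply false for $n=1$, since $1/2 > 0$. What actually saves the degenerate case is that (as you observe) $m = 0$ there, so $m \le n-1 = 0$ still holds even though the middle link of the displayed chain fails; equivalently, since $m$ is an integer, $m \le n/2$ already gives $m \le \lfloor n/2 \rfloor \le n-1$ for all $n \ge 1$. The paper's own statement is equally loose about $n=1$, but you should phrase the repair in terms of $m$, not claim vacuity of an arithmetic inequality that is false.
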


Guralnick  made a conjecture that there should be a universal bound on the dimension
of the first cohomology groups $\h(H, V ),$ where $H$ is a finite group and $V$ is an absolutely irreducible faithful representation for $H.$ The conjecture reduces to the case where $H$ is a finite simple group. Very recently, computer calculations of Frank L\"ubeck, complemented by those of Leonard
Scott and Tim Sprowl, have provided strong evidence that the Guralnick conjecture may
unfortunately be false. For our purpose is not necessary that the Guralnick conjecture is true. A much weaker version, which will be discussed in this section,
is enough. First we need a preliminary lemma.

\begin{lemma}\label{diff}
If $m\neq 0,$ then:
\begin{enumerate}
\item $H$ has a unique minimal normal subgroup $N$ and $N$ is nonabelian.
\item If $S$ is a component of $N$ and $W$ is an irreducible $FN$-submodule of $V$ which is not centralized by $S,$ then  the other components of $N$ act trivially on $W$. 
\item $m\leq \dim_F \h(S,W)$ for any irreducible submodule of $V$ which is not centralized by $S.$
\item Every element of $C_H(S)$ fixes at least a nonzero vector of $V.$

\end{enumerate}
\end{lemma}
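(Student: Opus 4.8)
The plan is to exploit three standard facts throughout. First, for any $M\trianglelefteq H$ the subspace $C_V(M)$ is an $FH$-submodule, so, as $V$ is irreducible and faithful, $C_V(M)=0$ whenever $M\neq 1$. Second, for $M\trianglelefteq H$ with $C_V(M)=0$ the Hochschild--Serre sequence yields an injection $\h(H,V)\hookrightarrow\h(M,V)$ whose image lies in $\h(M,V)^{H/M}$. Third, by Clifford's theorem $V|_M$ is semisimple and its irreducible constituents form a single $H$-orbit, while $\h$ of a perfect group on a trivial module is zero. The real engine is the following K\"unneth-type vanishing: \emph{if $M=M_1\times M_2$ with $M_1,M_2$ perfect and $W$ is an irreducible $FM$-module on which both $M_1$ and $M_2$ act nontrivially, then $\h(M,W)=0$.} Indeed $C_W(M_i)=0$ (an $FM$-submodule of the irreducible $W$, with $M_i$ nontrivial on $W$); extending scalars to a splitting field, $W$ becomes a sum of modules $A\boxtimes B$ with $A,B$ irreducible for $M_1,M_2$, and $C_W(M_i)=0$ forces $A^{M_1}=0=B^{M_2}$, so each term of the K\"unneth decomposition $\h(M_1\times M_2,A\boxtimes B)=(\h(M_1,A)\otimes B^{M_2})\oplus(A^{M_1}\otimes\h(M_2,B))$ vanishes; as cohomology commutes with the flat base change, $\h(M,W)=0$. (This structural fact is also available in \cite{AG}.)

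For (1): if $A$ is an abelian minimal normal subgroup it is an elementary abelian $q$-group acting nontrivially on $V$, so $C_V(A)=0$; if $q=\carr F$ then a nontrivial $q$-group on a nonzero module in characteristic $q$ has a nonzero fixed vector, a contradiction; if $q\neq\carr F$ then $H^i(A,V)=0$ for $i\ge1$, so the injection above gives $\h(H,V)\hookrightarrow\h(A,V)=0$; either way $m=0$. Hence every minimal normal subgroup is nonabelian; if $N_1\neq N_2$ were two of them, then $M=N_1N_2=N_1\times N_2$ has $C_W(N_i)=0$ for $i=1,2$ and every irreducible constituent $W$ of $V|_M$, so the K\"unneth step forces $\h(M,V)=0$ and thus $m=0$. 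So the minimal normal subgroup $N$ is unique; write $N=S_1\times\cdots\times S_t$ with $H$ transitive on the components.

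For (2): by the injection, $\h(H,V)\hookrightarrow\h(N,V)=\bigoplus_j\h(N,W_j)$ over the constituents $W_j$ of $V|_N$; these lie in one $H$-orbit, so the number $e$ of components acting nontrivially on a given $W_j$ is independent of $j$. If $e\ge2$, grouping $N$ as $S_i\times(\prod_{k\neq i}S_k)$ with $S_i$ in the support, the K\"unneth step gives $\h(N,W_j)=0$ for all $j$, whence $m=0$; so $e=1$, which is assertion (2). Consequently $V|_N=\bigoplus_{i=1}^tV_i$ with $V_i:=C_V(\prod_{k\neq i}S_k)$ a module on which $N$ acts through $S_i$ and $H$ permuting $\{V_1,\dots,V_t\}$ transitively. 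For (4): if $t=1$ then $C_H(S)=C_H(N)=1$ (a nontrivial $C_H(N)$ would contain a minimal normal subgroup $\neq N$), so the claim is vacuous; if $t\ge2$ suppose $c\in C_H(S)$, $S=S_1$, has $C_V(c)=0$, so $c-1$ is invertible on $V$. Choose a non-inner $\delta\in\der(H,V)$ (possible as $m\neq0$) and, since $v\mapsto[c,v]$ is surjective, subtract an inner derivation so that $\delta(c)=0$. For $s\in S_1$ the relation $sc=cs$ gives $\delta(s)^c=\delta(sc)=\delta(cs)=\delta(s)$, so $\delta(s)\in C_V(c)=0$ and $\delta|_{S_1}=0$. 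But $\delta|_N$ is non-inner, hence under $\h(N,V)=\bigoplus_i\h(S_i,V_i)$ a nonzero $H/N$-fixed vector; since $H$ permutes the summands transitively, all its components are nonzero, in particular the $S_1$-component $[\delta|_{S_1}]$ --- a contradiction.

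Finally (3): the injection $\h(H,V)\hookrightarrow\h(N,V)^{H/N}$ together with $V|_N=\bigoplus_iV_i$ and transitivity of $H$ on $\{V_i\}$ identifies the target with the $\Stab_H(V_1)/N$-fixed points of $\h(N,V_1)=\h(S,V_1)$; a Clifford-theoretic count of the multiplicity of $W$ in $V_1$ --- where $\End_{FH}(V)=F$ forces the multiplicity module to be irreducible over the relevant twisted group algebra --- bounds this by $\dim_F\h(S,W)$, and since all such $W$ are $H$-conjugate they give the same bound. The principal obstacle is the K\"unneth sub-step behind (1) and (2): over a non-closed field an irreducible module for a direct product need not split as an outer tensor product, and one gets around this by first base-changing to a splitting field (harmless here since our $F$ is finite); a secondary, more computational point is the multiplicity bookkeeping in (3), where the absolute irreducibility of $V$ is used essentially.
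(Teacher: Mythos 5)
Your approach diverges sharply from the paper's. The paper cites \cite[Lemma 5.2]{GKKL} wholesale for parts (1)--(3) and then derives (4) from GKKL's Lemmas 3.4, 3.10 and 3.11 (Shapiro's lemma, a K\"unneth-type vanishing, and a comparison lemma $\dim\h(I,U)\leq\dim\h(SC_H(S),U)$), whereas you attempt to reconstruct the whole lemma from first principles. For (1) and (2) this works and is essentially the GKKL argument: the inflation--restriction injection $\h(H,V)\hookrightarrow\h(M,V)^{H/M}$ for a normal $M$ with $C_V(M)=0$, combined with the K\"unneth vanishing after base change to a splitting field, rules out abelian minimal normal subgroups, a second minimal normal subgroup, and any constituent of $V|_N$ supported on more than one component. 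Your argument for (4) is genuinely different from the paper's and quite clean: normalize a non-inner derivation $\delta$ so that $\delta(c)=0$ for a hypothetical fixed-point-free $c\in C_H(S)$, deduce $\delta|_S=0$ from $sc=cs$, and then use the transitivity of $H/N$ on the summands of $\h(N,V)=\bigoplus_i\h(S_i,V_i)$ to contradict non-innerness. This works entirely at the level of cocycles and bypasses the $\h(I,U)\leq\h(SC_H(S),U)$ comparison the paper invokes; it also does not depend on (3).

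The gap is in (3). The sentence ``a Clifford-theoretic count of the multiplicity of $W$ in $V_1$ --- where $\End_{FH}(V)=F$ forces the multiplicity module to be irreducible over the relevant twisted group algebra --- bounds this by $\dim_F\h(S,W)$'' asserts the bound rather than proving it, and it is precisely the nontrivial step. After the reduction $m\leq\dim_F\h(S_1,V_1)^{T_1/N}$ with $T_1=N_H(S_1)$, you have $V_1$ absolutely irreducible over $T_1$ with $V_1|_{S_1}\cong\bigoplus_{j=1}^r W_j^{\oplus e}$ for $T_1$-conjugate $W_j\cong W$, so $\dim_F\h(S_1,V_1)=re\cdot\dim_F\h(S,W)$; there is no visible reason the $T_1/N$-fixed subspace has dimension at most $\dim_F\h(S,W)$ without a real argument controlling $e$ and how $T_1/N$ mixes the $e$-fold multiplicity space with the $r$ conjugacy types. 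This is exactly where GKKL use Lemma 3.11 (a comparison, not restriction, between $\h(I,U)$ and $\h(SC_H(S),U)$) together with a Frobenius-reciprocity computation on the multiplicity module; the paper sidesteps the whole issue by citing \cite[Lemma 5.2]{GKKL}. As it stands your proof establishes (1), (2) and (4) but not (3).
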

\begin{proof}
It is well known that if $K$ is an extension field of $F,$ then $\h(H,V)\otimes_FK$ and $\h(H,V\otimes_FK)$ are naturally isomorphic, so may assume that $F$ is algebraically closed.
The first three statements are proved in \cite[Lemma 5.2]{GKKL}. Let $\Omega$ be the 
set of irreducible $FN$-submodules of $V$ which are not centralized by $S$ and let
$U=\sum_{W\in \Omega}W.$ Let $I$ be the stabilizer of $U$ in $H$. It follows from (2) that $I = N_H(S).$ Since $V$ is irreducible, $U$ is an irreducible $I$-module. Let $R =SC_H(S).$ By \cite[Lemma 3.4]{GKKL}, $\h(H,V) =
\h(I,U)$ and, by \cite[Lemma 3.11]{GKKL}, $\dim \h(I,U) \leq
\dim \h(R,U).$ Since $R = S \times C_H(S),$ $U$ is a direct sum of modules of the form
$W \otimes X$ where $W\in \Omega$ and each $X$ is an irreducible $C_H(S)$-module. By \cite[Lemma 3.10]{GKKL} if all the $X$ are nontrivial  $C_H(S)$-modules then
$\h(R,U) = 0,$ and so $\h(H,V) = 0.$ So $C_H(S)$ acts trivially on some of the direct factors of $U.$
\end{proof}

\begin{prop}\label{como} Denote by $p$ the probability that an element $h$ of $H$ centralizes a non-zero vector of $V.$ There exists a constant $\alpha$ (independent on the choice of $H$ and $V$) with the property that if $|V|\leq |H|,$ then either $m\leq \alpha$ or $p|H|\geq  m^2.$
\end{prop}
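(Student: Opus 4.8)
The plan is to use Lemma \ref{diff} to reduce the problem to a statement about a single quasisimple group. Assume $m \neq 0$ (otherwise take $\alpha = 0$), so by Lemma \ref{diff}(1) the group $H$ has a unique minimal normal subgroup $N$, which is nonabelian, say $N = S_1 \times \cdots \times S_t$ with the $S_i$ the components, all conjugate to $S := S_1$. Pick an irreducible $FN$-submodule $W$ of $V$ not centralized by $S$; by Lemma \ref{diff}(2) the other components act trivially on $W$, and by part (3) we have $m \leq \dim_F \h(S,W)$. The module $V$, restricted to $N$, is induced (or at least controlled) by the $N_H(S)$-module built from $W$ and the conjugates of $S$; in particular $|V|$ is at least $|W|$ raised to a power related to the number of ``blocks'' in a Clifford-theoretic decomposition. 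The key numerical input is the absolute bound $\dim_F \h(S,W) \leq C \cdot \dim_F W$ (in fact $\leq \tfrac{1}{2}\dim_F W$, but we only need a linear bound) coming from the quasisimple case of the cohomology bound already invoked in Section 4 (\cite{AG}, \cite{guho}); combining, $m$ is at most a constant times $\log_q |W|$ where $q = |F|$.

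Next I would bring in the hypothesis $|V| \leq |H|$ together with part (4) of Lemma \ref{diff}: every element of $C_H(S)$ fixes a nonzero vector of $V$. The idea is that $p|H| \geq |C_H(S)|$, since every element of the subgroup $C_H(S)$ contributes to the count of elements centralizing a nonzero vector. So it suffices to show that $|C_H(S)| \geq m^2$ unless $m$ is bounded. Now $H$ acts transitively by conjugation on the $t$ components, so $|H : N_H(S)| = t$ and $|H| \leq t! \cdot |N_H(S)| $ is the wrong direction; instead I use $|H| = t \cdot |N_H(S)|$ is also wrong in general — rather $|H : N_H(S)| = t$ so $|N_H(S)| = |H|/t$, and $N_H(S) \geq S \times C_H(S)$ gives $|C_H(S)| \geq |H| / (t \cdot |S|)$. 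On the other hand $|V| \geq |W|^{?}$: here one must be careful. Writing $V \cong \mathrm{Ind}$ or using that the sum $U$ of the $H$-conjugates of $W$ has $|U| = |W|^{t'}$ for the number $t'$ of ``$S$-active'' components (which is at least $1$ and at most $t$), and since $U$ is a faithful $N$-submodule of $V$ we get roughly $|V| \geq |W|$ and $|V| \geq |S|$ is false too. The honest route: $V$ faithful for $H$ forces $|V| \geq |S|$ is not automatic, but $|V| \geq 2^{\dim W}$ and the cohomology bound gives $m \leq \dim W \leq \log_2 |V| \leq \log_2|H|$. So at worst $m \leq \log_2 |H|$, hence $m^2 \leq (\log_2|H|)^2$, and one needs $p|H| \geq (\log_2 |H|)^2$, i.e. $p \geq (\log_2|H|)^2/|H|$.

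To get that last inequality I would again use $p|H| \geq |C_H(S)| \geq |H|/(t|S|)$, so it suffices to prove $|H| \geq t |S| (\log_2|H|)^2$, equivalently $|N| = |S|^t \geq |S| (\log_2|H|)^2 / t \cdot (\text{something})$ — since $|H| \geq |N| = |S|^t$ and $t \leq \log_2|S|^t \leq \log_2|H|$, it is enough that $|S|^{t-1} \geq (\log_2|H|)^3$, which holds once $|S|$ and $t$ are not both tiny, i.e. outside finitely many exceptions absorbed into the constant $\alpha$. The one remaining case is $t = 1$ (so $N = S$ is quasisimple-ish, actually simple): then $N_H(S) = H$, $C_H(S) = C_H(N) = 1$ possibly, and part (4) becomes vacuous, so this route collapses. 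Here instead one uses directly that $m \leq \dim_F\h(S,W) \leq \tfrac12 \dim_F W \leq \tfrac12 \log_q|V|$ and that the proportion $p$ of elements of $H$ fixing a nonzero vector of an irreducible module is bounded below (this is where a fixed-point-ratio estimate for almost simple groups enters, of the type underlying \cite{fg1}); combined with $|V| \leq |H|$ this gives $p|H| \geq c|V| \geq c \cdot q^{2m} \geq m^2$ for $m$ larger than an absolute constant. The main obstacle is precisely this last step: controlling the proportion $p$ from below in the almost-simple case $t=1$, which forces one to separate out a genuine fixed-point-proportion argument rather than relying only on the crude bound $p|H| \geq |C_H(S)|$.
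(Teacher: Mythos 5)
Your overall skeleton -- reduce via Lemma \ref{diff} to a unique nonabelian minimal normal subgroup $N\cong S^t$, split on $t$, and for $t\geq 2$ estimate $p|H|$ from below by $|C_H(S)|$ using part (4) of that lemma -- is indeed the paper's route. But there are two genuine problems, one a sign error and one a missing idea.

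First, the inclusion $N_H(S)\supseteq S\times C_H(S)$ gives $|S|\,|C_H(S)|\leq |N_H(S)|=|H|/t$, i.e.\ an \emph{upper} bound $|C_H(S)|\leq |H|/(t|S|)$, not the lower bound you wrote. To get a lower bound one must use instead that $N_H(S)/C_H(S)$ embeds in $\aut(S)$, whence $|C_H(S)|\geq |H|/(t\,|\aut S|)$; this is exactly what the paper does. The rest of your $t\geq 2$ estimate (comparing this with $m\leq n\leq \log|H|\leq \log(|S|^{2t}|K|)$ and absorbing small $|S|$ into $\alpha$ via the finiteness of pairs $(S,W)$ with $\h(S,W)\neq 0$) is then essentially the paper's argument.

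Second, and more seriously, the $t=1$ (almost simple) case is not repaired by the tool you invoke. You propose a uniform lower bound $p\geq c>0$ via ``fixed-point-ratio estimates of the type underlying \cite{fg1}.'' Those results bound from below the proportion of fixed-point-\emph{free} elements, so they give an \emph{upper} bound on $p$; and a uniform lower bound on the proportion of elements with a nonzero fixed vector is in any case false for linear actions (already for $\ssl(2,q)$ on its natural module the proportion of elements with eigenvalue $1$ is $O(1/q)$). The paper's actual insight here is elementary and different: since $m\neq 0$, the characteristic $r$ of $F$ divides $|H|$, and every element of a Sylow $r$-subgroup of $H$, being an $r$-element acting on the $r$-group $V$, fixes a nonzero vector; hence $p|H|\geq |H|_r$. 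The proof then bounds $|H|_r$ below through a case analysis on $S=\soc H$: for $S=\alt(k)$ it combines the Guralnick--Kimmerle bound $m\leq n/(f-1)$ with Nagura's prime-gap theorem and the observation that $r\leq k/2$ (else the Sylow $r$-subgroup is cyclic and $m=0$); for $S$ of Lie type in cross characteristic the Landazuri--Seitz lower bound on $\dim V$ forces $|V|>|H|$ outside finitely many cases, contradicting $|V|\leq|H|$; and for $S$ of Lie type in defining characteristic it uses $|H|_r\geq |S|^{1/3}$ together with $m\leq n/2\leq \log|S|$. Without the Sylow-$r$ observation and this case analysis, the $t=1$ case remains open in your write-up.
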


\begin{proof}
We may assume $m\neq 0.$ By Lemma \ref{diff}, $H$ has a unique minimal normal subgroup $N\cong S^t$ where $S$ is a nonabelian simple group. First assume $t\neq 1$. We may identify $H$ with a subgroup
of $\aut S \wr K$ being $K$ the transitive subgroup of $\perm(t)$ induced by the conjugacy action of $H$ on the components.
 It follows from Lemma \ref{diff} (3), that
$$p|H|\geq |C_H(S)|\geq \frac{|H|}{t|\aut S|}\geq \frac {|K||S|^{t-1}}{t|\out S|},$$ while, since $2^n\leq q^n\leq |H|,$ we have
$$m<n \leq \log |H| \leq \log (|\aut S|^{t}|K|)\leq \log(|S|^{2t}|K|) .$$ It follows
that there exists $\tau$ such that $p|H|\geq m^2$ if $|S|\geq \tau.$ On the other hand, there are only finitely many possible pairs $(S,W)$ where $S$ is a simple group of order at most $\tau$ and $W$ is an irreducible $FS$-module with
$\h(S,W)\neq 0$ (since $\h(S,W)=0$ if $S$ and $W$ have coprime orders) so it follows from Lemma \ref{diff} (3) that there exists $\alpha$ such that $m\leq \alpha$ whenever $|S|\leq \tau.$

So we may assume that $H$ is an almost simple group, and that $S=\soc H$ is a finite group of Lie type or alternating group, since the number of possibilities for $H$ and
$V$  when $H$ is sporadic and $\h(H, V) \neq 0$ is finite. Let $r$ be the characteristic of $F.$ The condition $m\neq 0$ implies that $r$ divides $|H|.$ Moreover all the elements of a Sylow $r$-subgroup of $H$ centralize at least a non-zero vector of $V,$ so $p|H|\geq |H|_r,$ the largest power of $r$ dividing $|H|.$
We have three possibilities:

a) $S=\alt(k).$ Since $2^n\leq q^n\leq |H|\leq k!,$ we have $n\leq k\log k.$
By \cite[Corollary 3]{gk}, we have $m\leq n/(f-1)$ being $f$ the largest prime such that $f\leq k-2.$ Nagura \cite{na} proved that for each $x \geq 25,$ the interval 
$[x,6x/5]$ contains a prime, hence if $k$ is large enough then $(f-1)\geq k/2$ and consequently $m\leq k\log k /(f-1) \leq 2\log k.$ We cannot have $r>k/2,$
otherwise a Sylow $r$-subgroup of $H$ would be cyclic and this would implies $m=0$ (see \cite[Proposition 3.4]{gsg}). But then $k=ar+b$ with $a,b \in \mathbb N, a\geq 1$
and $b<r\leq k/2.$ So $(k!)_r\geq r^a\geq r\cdot a\geq k/2.$ We conclude that $|H|_r\geq k/2\geq (2\log k)^2\geq m^2$ if $k$ is large enough, say $k\geq \tau.$ Since there are only finitely many possibilities of $k\leq \tau$ and an absolutely irreducible $\alt(k)$-module $V$ such that $\h(\alt(k),V)\neq 0,$ we are done in this case.

b) $S$ is a group of Lie type defined over a field whose characteristic is different
from the characteristic $r$ of $F.$ Let us denote by $\delta(S)$ the smallest degree of a nontrivial irreducible representation of $S$ in cross characteristic. Lower bounds for the degree of irreducible representations of finite groups of Lie
	type in cross characteristic were found by Landazuri and Seitz \cite{ventidue} and improved
	later by Seitz and Zalesskii \cite{sz} and Tiep  \cite{diciotto}. It turns out that $\delta(S)$ is quite large, and, apart from finitely many exceptions, we have $r^{\delta(S)} >|\aut S|,$ in contradiction with $r^{\delta(S)}\leq |V| < |H|\leq |\aut S|.$

c)  $S$ is a group of Lie type defined over a field whose characteristic coincides with
the characteristic $r$ of $F.$ We have $p|H|\geq |H|_r\geq |S|^{1/3}$ (see \cite[Proposition 3.5]{artin}). On the other hand $|V|\leq |H|\leq |S|^2,$ hence
$m \leq n/2 \leq \log |S|$ and again we can conclude that $p|H|\geq |S|^{1/3} \geq  \log^2 |S|\geq m^2$ if $|S|$ is large enough.
\end{proof}

\section{Auxiliary results}

We begin this section with an elementary result in probability theory, which will play a crucial
role in our considerations. Let us denote by $B(m,p)$ the binomial random variable of parameters $m$ and $p.$

\begin{prop}\label{binomiale}
	For every real number $0<\epsilon <1,$ there exists an absolute constant $\gamma_\epsilon$ such that,
	for any positive integer $l$ and any positive real number $p<1,$ we have that $P(B(m,p)\geq l)\geq \epsilon$ if $m\geq \gamma_\epsilon l/p.$
\end{prop}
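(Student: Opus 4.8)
The plan is to control the lower tail of $B(m,p)$ using a Chernoff-type estimate. Write $\mu = mp$ for the mean. We want $P(B(m,p) \geq l) \geq \epsilon$, so the natural thing is to force $\mu$ to be a large multiple of $l$ (this is exactly what the hypothesis $m \geq \gamma_\epsilon l/p$ buys us, since it gives $\mu = mp \geq \gamma_\epsilon l$), and then argue that the probability of being below $l$, i.e.\ below a tiny fraction of the mean, is at most $1-\epsilon$. Concretely, I would fix $\gamma_\epsilon$ large enough that $l \leq \mu/2$, and then estimate $P(B(m,p) \leq \mu/2) = P(B(m,p) \leq (1-1/2)\mu)$ by the standard multiplicative Chernoff bound for the lower tail, which gives $P(B(m,p) \leq (1-\theta)\mu) \leq e^{-\theta^2 \mu/2}$ for $0<\theta<1$. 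With $\theta = 1/2$ this is $e^{-\mu/8} \leq e^{-\gamma_\epsilon l/8} \leq e^{-\gamma_\epsilon/8}$, since $l \geq 1$. Choosing $\gamma_\epsilon \geq \max\{2,\, 8\log(1/(1-\epsilon))\}$ makes this last quantity at most $1-\epsilon$, and hence $P(B(m,p)\geq l) \geq P(B(m,p) > \mu/2) \geq \epsilon$ (here the $\gamma_\epsilon \geq 2$ ensures $l \le \mu/2$, so $\{B \ge l\} \supseteq \{B > \mu/2\}$).

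An alternative, if one prefers to avoid quoting Chernoff, is to use Chebyshev's inequality: $\operatorname{Var}(B(m,p)) = mp(1-p) \leq mp = \mu$, so $P(B(m,p) \leq \mu/2) \leq P(|B(m,p)-\mu| \geq \mu/2) \leq \operatorname{Var}(B(m,p))/(\mu/2)^2 = 4/\mu \leq 4/(\gamma_\epsilon l) \leq 4/\gamma_\epsilon$. Then $\gamma_\epsilon \geq \max\{2, 4/(1-\epsilon)\}$ works by exactly the same reasoning. This version is completely self-contained and the inequalities are routine; it is the one I would actually write down.

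I do not anticipate a genuine obstacle here: the only mild subtlety is bookkeeping — making sure the single constant $\gamma_\epsilon$ simultaneously forces $l \leq \mu/2$ (so that $\{B \geq l\} \supseteq \{B > \mu/2\}$) \emph{and} pushes the tail probability below $1-\epsilon$, uniformly in $l$ and in $p<1$. Both requirements are of the form ``$\gamma_\epsilon$ exceeds some explicit function of $\epsilon$ alone,'' so taking the maximum of the two thresholds does the job, and the resulting $\gamma_\epsilon$ is manifestly independent of $l$, $m$, and $p$. The independence from $p$ is automatic because $p$ enters only through the product $mp$, which the hypothesis already controls from below; the factor $1-p$ in the variance (or in the Chernoff exponent) only helps.
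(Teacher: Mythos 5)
Your proposal is correct, and in fact you give two valid arguments. Your first route (multiplicative Chernoff, $P(B \leq (1-\theta)\mu) \leq e^{-\theta^2\mu/2}$ with $\theta = 1/2$) is essentially the paper's approach: the paper also uses a Chernoff bound, but applies it in the raw moment-generating-function form, taking $t=-1$ in $P(X\leq a)\leq e^{-ta}M(t)$ to get $P(X\leq l)\leq e^l(1-\alpha p)^m$ with $\alpha = 1-1/e$, and then bounds $(1-\alpha p)^{\gamma_\epsilon/p}\leq e^{-\gamma_\epsilon\alpha}$. These are the same idea packaged slightly differently; both yield an explicit threshold $\gamma_\epsilon$ of order $\log(1/(1-\epsilon))$. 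Your second route via Chebyshev is genuinely more elementary: it replaces the exponential tail estimate by the variance bound $\mathrm{Var}(B)\leq mp=\mu$ and gets $P(B\leq\mu/2)\leq 4/\mu\leq 4/\gamma_\epsilon$, which suffices here because all the proposition requires is that the lower-tail probability fall below $1-\epsilon$, not that it decay fast. The tradeoff is that the resulting $\gamma_\epsilon$ grows like $1/(1-\epsilon)$ rather than $\log(1/(1-\epsilon))$; since the paper only ever invokes this with fixed $\epsilon = 6/7$, the weaker dependence is harmless, and the Chebyshev version has the advantage of being completely self-contained. One small bookkeeping point you handled correctly: the single constant must both force $l\leq\mu/2$ (so $\{B\geq l\}\supseteq\{B>\mu/2\}$) and push the tail probability below $1-\epsilon$, and taking the maximum of the two thresholds does exactly that, uniformly in $l$, $m$, $p$.
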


\begin{proof}
	Let $M(t)$ be the moment generating function of the
	random variable
	$X=B(m,p).$ We have $M(t)=(pe^t+(1-p))^m.$
	By Chernoff's bounds (see for example \cite[Chapter 8, Proposition 5.2]{ross}),
	$P(X\leq a)\leq e^{-ta}M(t)$ for every real negative number $t$.
	Taking $t=-1$ and $a=l$, we deduce $$P(X\leq l)\leq e^l(1-\alpha p)^m \text { with } \alpha=(1-1/e).$$
	In particular $P(X\geq l)\geq 1-e^l(1-\alpha p)^m,$ and
	we reduce to prove that there exists $\gamma_\epsilon$ such that
	$e^l(1-\alpha p)^m \leq (1-\epsilon)$ if $m\geq \gamma_\epsilon l/p.$
	It suffices to choose $\gamma_\epsilon$ such that $(1-\alpha p)^{\gamma_\epsilon/p}\leq (1-\epsilon)/e.$ Since $(1-\alpha p)^{\gamma_e/p}=(1-\alpha p)^{\alpha\gamma_\epsilon/\alpha p}\leq e^{-\gamma_\epsilon\alpha}$, it suffices to take
	$\gamma_\epsilon \geq (1-\log(1-\epsilon))/{\alpha}.$
\end{proof}

From now on we will use the notation $\langle x_1,\dots,x_d\rangle_I=G$ to say that
$x_1,\dots,x_d$ invariably generate $G.$

\begin{lemma}\label{relativo}Assume that $G$ is a finite  group with trivial Frattini subgroup and let $I=I_G(A), R=R_G(A), U$ be as in the statement of Lemma \ref{corona}. Let $g_1,\dots,g_t \in G.$
	If $\langle g_1U,\dots,g_tU\rangle_I=G/U$ and
	$\langle g_1R,\dots,g_tR\rangle_I=G/R,$ then
	$\langle g_1,\dots,g_t\rangle_I=G.$
\end{lemma}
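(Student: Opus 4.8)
The statement is a straightforward consequence of Lemma \ref{sotto}, and the plan is to reduce invariable generation of $G$ to ordinary generation of the complement $K$ that arises. Let $g_1,\dots,g_t\in G$ satisfy the two hypotheses, and pick an arbitrary tuple of conjugating elements $x_1,\dots,x_t\in G$; we must show $K:=\langle g_1^{x_1},\dots,g_t^{x_t}\rangle=G$. Projecting modulo $U$ and modulo $R$, the hypothesis $\langle g_1U,\dots,g_tU\rangle_I=G/U$ gives $KU/U=G/U$, i.e. $KU=G$, and likewise $\langle g_1R,\dots,g_tR\rangle_I=G/R$ gives $KR=G$. (Here one uses that the images of the conjugates $g_i^{x_i}$ in $G/U$, resp. $G/R$, are conjugates of $g_iU$, resp. $g_iR$, which is immediate since quotient maps send conjugates to conjugates.) Now $K$ is a subgroup of $G$ with $KU=G$ and $KR=G$, so by Lemma \ref{sotto} we conclude $K=G$. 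Since the conjugating tuple was arbitrary, $\langle g_1,\dots,g_t\rangle_I=G$.

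The only point that requires a word of care is the passage to quotients: invariable generation of $G/U$ by $\{g_iU\}$ means that $\langle (g_1U)^{y_1},\dots,(g_tU)^{y_t}\rangle=G/U$ for \emph{every} choice of $y_i\in G/U$, and in particular for the choice $y_i=x_iU$ induced by our fixed $x_i\in G$; then $(g_iU)^{x_iU}=g_i^{x_i}U$, so the subgroup of $G/U$ generated by the $g_i^{x_i}U$ is all of $G/U$, which is exactly the assertion $KU=G$. The same reasoning applies verbatim with $R$ in place of $U$. No nontrivial obstacle arises: the content has been packaged into Lemma \ref{corona} and especially Lemma \ref{sotto}, and the present lemma is the ``invariable'' repackaging of that fact. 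The proof is therefore short, and I would write it essentially as the two display-free paragraphs above.
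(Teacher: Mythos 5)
Your proof is correct and is essentially identical to the paper's: fix an arbitrary conjugating tuple, form $K$, deduce $KU=G$ and $KR=G$ from invariable generation in the two quotients, and apply Lemma \ref{sotto}. You merely spell out the passage to quotients in a bit more detail than the paper does.
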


\begin{proof}
	Let $x_1,\dots,x_t\in G$ and consider $K=\langle g_1^{x_1},\dots,g_t^{x_t}\rangle.$	Since $\langle g_1U,\dots,{g_t}U\rangle_I=G/U$ (and resp. $\langle g_1R,\dots,g_tR\rangle_I=G/R$ ) we have $KU=G$ (and resp.
	$KR=G$). But then $K=G$ by Lemma \ref{sotto}.
\end{proof}

\begin{lemma}\label{nonabcr}\cite[Proof of Theorem 4.1]{ig}
Denote by $P_G^*(k)$ the probability that $k$ randomly chosen elements $g_1,\dots,g_k\in G$ have the property that there exists a maximal subgroup $M$ of $G$ such that the primitive group $G/\core_G(M)$ is
not of affine type and $g_1,\dots,g_k \in \cup_{g\in G}M^g.$
For any $\epsilon>0,$ there exists $c_{\epsilon}$ such that
$P^*_G(k)\leq \epsilon$ for any finite group $G$ and any $k\geq
c_\epsilon(\log|G|)^2.$
\end{lemma}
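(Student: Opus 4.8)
The plan is to reduce to a single primitive quotient and apply the Fulman--Guralnick bound on fixed-point-free elements. Fix a finite group $G$ and, for each maximal subgroup $M$ of $G$ with $G/\core_G(M)$ not of affine type, let $E_M$ be the event that a random $g\in G$ lies in $\bigcup_{x\in G}M^x$. Passing to $\ol G=G/\core_G(M)$ acting faithfully and primitively on the cosets of $M/\core_G(M)$ of degree $n_M=[G:M]$, the complement of $E_M$ is exactly the event that the image of $g$ acts as a fixed-point-free permutation; since $\core_G(M)$ is contained in every point stabilizer conjugate, this proportion is unchanged by lifting to $G$. By \cite[Theorem 8.1]{fg1} the fixed-point-free proportion in a non-affine primitive group of degree $n$ is at least $c_2/\log n$, so $P(E_M)\le 1-c_2/\log n_M\le 1-c_2/\log|G|$.

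Next I would bound the number of relevant $M$ up to conjugacy. Distinct conjugacy classes of maximal subgroups have distinct events only up to the union over conjugates, so it suffices to sum over conjugacy class representatives. By \cite[Theorem 1.3]{lps} there are at most $c_1|G|^{3/2}$ maximal subgroups in total, hence certainly at most $c_1|G|^{3/2}$ conjugacy classes of them. A union bound over the $k$-fold independent choices then gives, for each such $M$,
$$P(g_1,\dots,g_k\in\textstyle\bigcup_{x\in G}M^x)\le\left(1-\frac{c_2}{\log|G|}\right)^k\le \exp\!\left(-\frac{c_2 k}{\log|G|}\right),$$
and summing over the at most $c_1|G|^{3/2}$ classes yields
$$P_G^*(k)\le c_1|G|^{3/2}\exp\!\left(-\frac{c_2 k}{\log|G|}\right).$$

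Finally I would choose $k$. Taking $k\ge c_\epsilon(\log|G|)^2$ with $c_\epsilon$ large enough that $c_2 c_\epsilon\log|G|\ge \tfrac32\log|G|+\log(c_1/\epsilon)$ — which holds once $c_\epsilon\ge (3/(2c_2))+ (\log(c_1/\epsilon))/(c_2\log 2)$, using $|G|\ge 2$ — makes the right-hand side at most $\epsilon$. (For the finitely many $G$ with $\log|G|$ too small to absorb the constant, or trivial $G$ with no such $M$, one enlarges $c_\epsilon$ directly.) The one genuine input beyond bookkeeping is the uniform lower bound $c_2/\log n$ on fixed-point-free proportions, which is \cite[Theorem 8.1]{fg1} and which I am taking as given; the rest is a union bound calibrated so that the polynomial factor $|G|^{3/2}$ is killed by the exponential decay in $k/\log|G|$, and the subtlety to watch is simply that one must sum over conjugacy classes of maximals (not all subgroups, and not all maximals counted with multiplicity) so that the $c_1|G|^{3/2}$ bound from \cite{lps} is the correct quantity to use.
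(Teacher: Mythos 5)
Your proof is correct and follows essentially the same approach the paper attributes to \cite[Theorem 4.1]{ig}: you reconstruct precisely the intermediate estimate $P_G^*(k)\leq c_1|G|^{3/2}\left(1-c_2/\log|G|\right)^k$ that the paper cites, by combining the Fulman--Guralnick lower bound on fixed-point-free proportions in non-affine primitive groups with the Liebeck--Pyber--Shalev bound on the number of maximal subgroups and a union bound over conjugacy classes. The paper's own ``proof'' of this lemma is merely a pointer to that computation in \cite{ig}, so your argument matches it.
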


\begin{proof}
This result is part of the proof of \cite[Theorem 4.1]{ig}. In the first part of that proof, the authors show that $$P_G^*(k)\leq c_1\sqrt{|G|^3}\left(1-c_2/\log|G|\right)^k$$
for some absolute constants $c_1$ and $c_2$ and notice that there exists $c_3$ such that if  $k\geq c_3(\log|G|)^2,$ then
the right-hand tends to zero as $|G|\to \infty.$ 
\end{proof}

The authors of \cite{ig} notice that the proof of the previous result uses \cite[Theorem 8.1]{fg1}, which in turn relies on the conjecture, due to Boston and Shalev, stating that there exists an absolute constant $\epsilon > 0$ such that the proportion of fixed-point-free elements in any finite simple
transitive permutation group is at least $\epsilon.$ However,  in \cite{ig} it is noticed that a
weaker version of \cite[Theorem 8.1]{fg1} allows to prove
that for any $\epsilon>0$ there exists $c_{\epsilon}$ such that
$P^*_G(k)\leq \epsilon$ for any finite group $G$ and any
$k\geq c_\epsilon(\log|G|)^3|G|^{1/3}.$ This weaker version of Lemma \ref{nonabcr} still suffices for our purpose.

\

We now introduce some other definitions. Let $N$ be a normal subgroup of a finite group $G$ and let $\Lambda_{G,N}$ be the set of the  ordered sequence $(x_1,\dots,x_d)\in G^d$ (for any possible choice of $d)$ having the property that $\langle Nx_1,\dots Nx_d\rangle_I=G/N.$
For  $\xi=(x_1,\dots,x_d)\in \Lambda_{G,N},$
denote by $P_I(G,N,\xi,k)$ the probability that $k$ randomly chosen elements $y_1,\dots,y_k$ of $G$ have the property that
$\langle x_1,\dots,x_d,y_1,\dots,y_k\rangle_I=G$ and
let $$P_I(G,N,k)=\inf_{\xi \in \Lambda_{G,N}}P_I(G,N,\xi,k).$$
We have in particular
$$P_I(G,k_1+k_2)\geq P_I(G/N,k_1)P_I(G,N,k_2)$$ for every
$k_1,k_2\in\mathbb N.$

\begin{lemma}\label{cruciale} Assume that $G$ is a finite  group with trivial Frattini subgroup and let $I=I_G(A),$ $R=R_G(A),$ $U$ be as in the statement of Lemma \ref{corona}. %Assume that $\xi=(x_1,\dots,x_d)$ is an ordered sequence of elements of $G$ with the property that $Ux_1,\dots Ux_d$ invariably generate $G/U.$	Denote by $P_I(G,\xi,k)$ the probability that $k$ randomly chosen elements $y_1,\dots,y_k$ of $G$ have the property that	$x_1,\dots,x_d,y_1,\dots,y_k$ invariably generate $G.$
	There exists an absolute constant
	$c,$ independent on the choice of $G,$ such that if $k\geq c\sqrt{|G|},$ then
	$P_I(G,U,k)\geq {3}/{4}.$
\end{lemma}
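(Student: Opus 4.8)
The plan is to reduce to a statement about the crown $I/R$, which by Lemma \ref{corona} is a direct factor $U$ (a direct product of copies of the monolithic primitive group $L_A$ associated to the chief factor $A$), and then to split into the two cases according to whether $A$ is abelian or not. In both cases $G/R$ is the crown-based power $(L_A)_\delta$ with $\delta=\delta_G(A)$, and $U$ contains (indeed equals) the product of the $\delta$ minimal normal subgroups of $G/R$ that are $G$-equivalent to $A$. Fix any $\xi=(x_1,\dots,x_d)\in\Lambda_{G,U}$, so $x_1U,\dots,x_dU$ invariably generate $G/U$; I want to show that after appending $k\geq c\sqrt{|G|}$ further random elements we invariably generate $G$ with probability at least $3/4$. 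Since $UR=G$ (as $I=R\times U$ and the socle $I/R$ maps onto $U$-part), reducing modulo $R$ shows $x_1R,\dots,x_dR$ already invariably generate $G/R$ outside the crown; by Lemma \ref{relativo} it suffices to make $\langle x_1,\dots,x_d,y_1,\dots,y_k\rangle$ invariably cover $G$ modulo $R$, i.e. to invariably generate the crown-based power $(L_A)_\delta$ given invariable generators of $L_A/\soc$.

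For the \emph{abelian case}, $L_A=V\rtimes H$ with $V$ elementary abelian, $H$ acting faithfully and irreducibly, and $(L_A)_\delta\cong V^\delta\rtimes H$ with diagonal action. Here $h_1,\dots,h_d$ (the images of the $x_i$) invariably generate $H$, and I must append $y_1,\dots,y_k$ (images $h_{d+1}w_{d+1},\dots$) so that the whole tuple invariably generates $V^\delta\rtimes H$. Proposition \ref{matrici} together with Lemma \ref{dimen} tells me this is possible for $y_i$ chosen suitably provided
$$\delta\ \leq\ n(d+k)-\dim_F(D+W)\ \ \text{and in fact as soon as}\ \ \delta\le \sum_{i=1}^{d+k}\dim_F C_V(h_i)-m,$$
where $n=\dim_F V$, $m=\dim_F\h(H,V)$, and $F=\End_H(V)$. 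Now $\delta$ is controlled because $|U|\ge |A|^\delta$ and $|U|\le|G|$, so $\delta\le \log_{|A|}|G|\le \log|G|$ roughly; and each random $y_i$ contributes a term $\dim_F C_V(h_i)$ which is $\ge 1$ with probability $p$ (the quantity in Proposition \ref{como}). The crucial point is that $p|H|$ is large relative to $m$: if $|V|\le|H|$ then Proposition \ref{como} gives $m\le\alpha$ or $p|H|\ge m^2$, and in either sub-case a Chernoff estimate (Proposition \ref{binomiale}, with the number of successes $B(k,p)$ needing to exceed $\delta+m$) shows that $k$ of order $1/p$ times $(\delta+m)$ suffices with probability $\ge 3/4$; since $1/p\le |H|/(p|H|)$ and $|H|\le|G|^{1/2}\cdot(\text{something})$ one must check this is $\le c\sqrt{|G|}$. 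If instead $|V|>|H|$ then $m=0$ by the bound $m<n\le\log|H|<\log|V|$ — no, more carefully: when $|V|>|H|$ one has $|H|<|V|\le|G|^{1/2}$-ish only if $|V|\le|G|$, and in the complementary regime $|V|$ itself is the dominant factor, forcing $\delta$ small (at most $1$ or $2$) so that only $O(1)$ random elements are needed with bounded-below probability by a direct counting argument on the proportion of non-vanishing $r_j$'s modulo $D+W$, whose codimension is at most $nd$ — here $\sqrt{|V^\delta|}\le\sqrt{|G|}$ gives the room. I expect the bookkeeping that glues these regimes — tracking $|V|$ versus $|H|$ versus $|G|$ and keeping the constant absolute — to be the most delicate part.

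For the \emph{nonabelian case}, $A=S^r$ with $S$ nonabelian simple, $L_A=G/C_G(A)$ is almost simple (or a product acting as such), and $(L_A)_\delta$ is a crown-based power with nonabelian socle $S^{r\delta}$. One proves invariable generation of $(L_A)_\delta$ from invariable generators of $L_A/\soc$ by a random process: appending $k$ random elements, each reduces modulo a different direct factor the obstruction coming from maximal subgroups, and one uses that a non-affine primitive group of degree $n$ has a proportion $\ge c_2/\log n$ of fixed-point-free elements — equivalently, elements lying in no conjugate of a given point stabilizer. Concretely, the bad events are indexed by the $\delta$ coordinates of the socle; for each, a random $g\in G$ maps to a fixed-point-free permutation with probability bounded below, so after $k\gg \delta\log|G|$ steps all coordinates are "hit" with probability $\ge 3/4$ (again Chernoff, via Proposition \ref{binomiale}). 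Since $|U|=|A|^\delta\le|G|$ and $|A|\ge 60$, we have $\delta\le\log_{60}|G|$, and $|G/C_G(A)|\le |G|$ while the proportion of good elements is $\ge c_2/\log|G|$; so the required $k$ is at most (constant)$\cdot(\log|G|)^2$, which is certainly $\le c\sqrt{|G|}$ for an absolute $c$. The hardest conceptual step here is the reduction showing that "hitting every socle coordinate with a non-affine-fixed-point-free element" is exactly what is needed to invariably generate the crown-based power on top of invariable generators of the quotient — this is where Lemma \ref{nonabcr} and the structure of crowns (two $G$-equivalent chief factors, a shared maximal subgroup) are used, and it must be done uniformly so that the estimate $P_I(G,U,k)\ge 3/4$ holds for \emph{every} $\xi\in\Lambda_{G,U}$, i.e. worst case over all invariable generating sequences of $G/U$.

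Finally, combining the two cases: whichever type the crown $I/R$ has, we obtain $k\le c\sqrt{|G|}$ with $P_I(G,U,\xi,k)\ge 3/4$ for all $\xi$, hence $P_I(G,U,k)=\inf_\xi P_I(G,U,\xi,k)\ge 3/4$. I expect the main obstacle to be the abelian case when $m=\dim_F\h(H,V)$ is large — this is precisely why Proposition \ref{como} was proved — because there the naive bound "$k$ random elements give $k$ fixed vectors" can fail badly (many $h\in H$ may have $C_V(h)=0$), and one genuinely needs $p|H|\ge m^2$ to beat the $+m$ defect coming from the first cohomology.
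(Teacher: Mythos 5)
Your proposal has the right skeleton: reduce modulo $R$ to the crown-based power $(L_A)_\delta$ via Lemma \ref{relativo}, split on whether $A$ is abelian, and in the abelian case use Propositions \ref{matrici}, \ref{como}, Lemma \ref{dimen} and the Chernoff bound of Proposition \ref{binomiale}, while in the nonabelian case use Lemma \ref{nonabcr}. This is indeed the paper's strategy. However, there are two concrete gaps in how you would carry it out.

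In the nonabelian case, you describe a process where ``the bad events are indexed by the $\delta$ coordinates of the socle'' and you try to ``hit'' each coordinate with a fixed-point-free element. This is not the right mechanism, and it is not how Lemma \ref{nonabcr} connects to the problem. The maximal subgroups $M$ witnessing a failure of invariable generation are not in bijection with socle coordinates, and the Boston--Shalev input is already baked into the definition of $P_G^*(k)$. The paper's actual key step is short but essential: if $\langle x_1,\dots,x_d,y_1,\dots,y_k\rangle^{g_i}$ all lie in some maximal $M$, then $M$ cannot contain $U$ (otherwise the images $Ux_i$ would all lie in conjugates of $M/U$, contradicting $\xi\in\Lambda_{G,U}$), hence $MU=G$, hence since $U\cong A^\delta$ is a product of nonabelian chief factors the primitive quotient $G/\core_G(M)$ is \emph{not affine}. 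Only after this observation does $P_G^*(k)$ directly bound the failure probability, giving $P_I(G,U,\xi,k)\ge 1-P_G^*(k)\ge 3/4$ once $k\ge c_{1/4}(\log|G|)^2$. Without the $MU=G$ argument you have no license to invoke Lemma \ref{nonabcr}.

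In the abelian case your treatment of the regime $|V|>|H|$ is incorrect: you assert that this ``forces $\delta$ small (at most $1$ or $2$),'' which is false --- $\delta$ can be of order $\log|G|/\log|V|$. The paper handles $|H|\le|V|$ by a different device: with probability $1/|H|\ge 1/|V|$ a random $h_i$ equals the identity, and on the index set $J_\omega=\{i:h_i=1\}$ the subspace $W_{J_\omega}+D_{J_\omega}$ is zero, so each such coordinate contributes full codimension $n$ independently of $m$. The conclusion then follows from $\gamma(\delta+3)|H|\le c_4\sqrt{|G|}$, using $|G|=|H||V|^\delta$ and $|H|\le|V|$. You also omit the degenerate case $H=1$, $G\cong C_p^\delta$, where $p|H|\ge m^2$ is vacuous and a direct generating-probability computation is needed. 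These are not merely bookkeeping issues; they are separate arguments one must actually give.
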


\begin{proof} 
	It suffices to prove that there exists an absolute constant
	$c,$ independent on the choice of $G$ and $\xi,$ such that if $k\geq c\sqrt{|G|},$ then $P_I(G,U,\xi,k)\geq {3}/{4}$
	for every  $\xi\in \Lambda_{G,U}.$ So we fix  $\xi=(x_1,\dots,x_d)\in \Lambda_{G,U}$ and we estimate $P_I(G,U,\xi,k).$
	Let $\bar G=G/R$ and $\bar \xi=(x_1R,\dots,x_dR) \in \bar G^d.$
	By Lemma
 \ref{relativo}, given $(y_1,\dots,y_k)\in H^k,$ if $\langle x_1R,\dots,x_d,y_1R,\dots,y_kR\rangle_I=\bar G$ then
	$\langle x_1,\dots,x_d,y_1,\dots,y_k\rangle_I=G,$ hence
	$P_I(G,U,\xi,k)\geq P_I(\bar G,\bar U,\bar \xi,k),$ and so we may assume $R=1.$ We have $R=R_G(A)$ where $A$ is an irreducible $G$-group: in particular $G=L_\delta$ where
	$L$ is the monolithic primitive group associated to $A$ and
	 $\delta=\delta_G(A).$ 
	 
	 First assume that $A$ is nonabelian. We want to count the $k$-tuples
 $(y_1,\dots,y_k)$ such that $\langle x_1,\dots,x_d,y_1,\dots,y_k\rangle_I=G.$
	  If $\langle x_1,\dots,x_d,y_1,\dots,y_k\rangle_I\neq G,$ then
	 there exists a maximal subgroup $M$ of $G$ such that
	$$\{x_1,\dots,x_d,y_1,\dots,y_k\}\subseteq \cup_{g\in G}M^g.$$
	 This $M$ cannot contain $U$, otherwise
	 $\{Ux_1,\dots,Ux_d\}\subseteq \cup_{gU\in G/U}(M/U)^{gU},$
	 against the property that $Ux_1,\dots Ux_d$ invariably generate $G/U.$ Thus $MU=G$ and, consequently, being $U\cong A^\delta$ with $A$ nonabelian, the primitive group $G/\core_G(M)$ is not of affine type and  $\{y_1,\dots,y_k\}\subseteq \cup_{g\in G}M^g.$ Hence, by Lemma \ref{nonabcr},
	$P_I(G,U,\xi,k)\geq 1-P_G^*(k)\geq 3/4$ if $k\geq c_{1/4}(\log|G|)^2.$ Clearly there exists an absolute constant $c^*$ such that $c_{1/4}(\log m)^2\leq c^*\sqrt m$
	for every $m\in \mathbb N.$
	
	We assume now that $A$ is abelian. In this case $A$ is $G$-isomorphic to an irreducible $G$-module $V.$ Moreover
	either $V\cong C_p$ is a trivial $G$-module and $G\cong (C_p)^\delta$  or $G\cong U\rtimes H$
	where $H$ acts in the same say on each of the $\delta$ factors of $U\cong V^\delta$ and this action
	is faithful and irreducible.
	
	In the first case, denoting by $P(C_p^\delta,k)$ the probability that $k$ elements of $C_p^\delta$ generate $C_p^\delta$, we have
	$$P_I(G,U,\xi,k)\geq  P_I(C_p^\delta,k)=P(C_p^\delta,k)=\prod_{k-\delta+1\leq i \leq k}\left(1-\frac{1}{p^i}\right)\geq 1-\frac{p^\delta-1}{p-1}\frac{1}{p^k}\geq 1-\frac{p^\delta}{p^k},$$
	in particular $P_I(G,U,\xi,k)\geq 3/4$ if $k\geq \delta+2:$ it suffices to choose $c\geq 3/\sqrt 2,$ since in that case
	$c\sqrt{|G|}\geq 3p^{\delta/2}/\sqrt 2\geq \delta+2.$
	
	In the second case, we have $G=V^\delta\rtimes H$ and we estimate $P_I(G,U,\xi,k)$ by applying Proposition \ref{matrici}.
	Let $F=\End_HV,$ with $|F|=q,$ and let $n=\dim_FV$ (so in particular $|V|=q^n$). For $i\in\{1,\dots,d\},$ let $x_i=k_iw_i$ with $w_i\in V^\delta$ and $k_i\in H.$ Now choose
	$y_1,\dots,y_k\in G$, where $y_j=h_jw_j^*$ with $w_j^*\in V^\delta$
	and $h_j\in H.$ Given a subset $J=\{j_1,\dots,j_f\}$ of $I=\{1,\dots,k\}$, 
	consider the projection $\pi_J:V^{d+k}\to V^f$ defined by setting
	$\pi_J(v_1,\dots,v_d,v_1^*,\dots,v_k^*)=(v_{j_1}^*,\dots,v_{j_f}^*)$ and for $t\in \{1,\dots,\delta\}$ let
	$$\begin{aligned}r_t&=(\pi_t(w_1),\dots,\pi_t(w_d),\pi_t(w_1^*),\dots,\pi_t(w_k^*))\in V^{d+k},\\ r_{t,J}&=\pi_J(r_t)=(\pi_t(w_{j_1}^*),\dots,\pi_t(w_{j_f}^*))\in V^f.
	\end{aligned}$$
	Moreover let
	$$
	\begin{aligned}W&=\{(u_1,\dots,u_d,u_1^*,\dots,u_k^*)\!\mid \! u_i\!\in [k_i,V] \text { for }1\leq i\leq d, u_j^*\!\in [h_j,V]\text { for }1\leq j\leq k\},\\
D&=\{ \big(\delta(k_1), \dots , \delta(k_d),\delta(h_1), \dots , \delta(h_k)\big)\in V^{d+k} \mid \delta\in \der(H,V)\},\\
	W_J&=\pi_J(W)=\{(u_{j_1}^*,\dots,u_{j_f}^*)\mid u_{j_i}^*\in [h_{j_i},V]\text { for }1\leq i\leq f\},\\
D_J&=\pi_J(D)=\{ \big(\delta(h_{j_1}), \dots , \delta(h_{j_f})\big)\in V^{f} \mid \delta\in \der(H,V)\}.
	\end{aligned}$$
Notice that	 if the vectors $r_{1,J},\dots,r_{\delta,J}$ are $F$-linearly independent modulo $W_J+D_J$ for some $J\subseteq I, $ then $r_1,\dots,r_\delta$ are linearly independent modulo $W+D$ and, by Proposition \ref{matrici},
	 $\langle x_1,\dots,x_d,y_1,\dots,y_k\rangle_I=G.$ Now let $m=\dim_F \h(H,V)$ and distinguish the following cases:

 a) $|H|\geq |V|m^2$.
%	Clearly if $\overline{r_1},\dots,\overline{r_\delta}$ are $F$-linearly independent modulo $\overline W+\overline D,$ then $r_1,\dots,r_\delta$ are linearly independent modulo $W+D$ and, by Proposition \ref{matrici},
%	$\langle x_1,\dots,x_d,y_1,\dots,y_k\rangle_I=G.$
Let $\Delta_{l}$ be the subset of $H^k$ consisting of the $k$-tuples $(h_1,\dots,h_k)$ with the property that $C_V(h_i)\neq 0$ for at least $l$ different choices of $i\in\{1,\dots,k\}.$  If $(h_1,\dots,h_k)\in \Delta_{l},$ then, by Lemma \ref{dimen}, $W_I+D_I$ is a subspace of $V^k\cong F^{nk}$ of codimension at least $l-m$: so the probability that $r_{1,I},\dots,r_{\delta,I}$ are $F$-linearly independent modulo $W_I+D_I$ is at least
$$\begin{aligned}p_{l}&=\left(\frac{q^{nk}-q^{nk-l+m}}{q^{nk}}\right)
\cdots\left(\frac{q^{nk}-q^{nk-l+m+\delta-1}}{q^{nk}}\right)\\
&=\left(1-\frac{1}{q^{l-m}}\right)\dots \left(1-\frac{q^{\delta-1}}{q^{l-m}}\right)\geq 1-\left(\frac{q^\delta-1}{q-1}\right)\frac{1}{q^{l-m}}.
\end{aligned}$$
Notice in particular that $p_{l}\geq 7/8$ if $l\geq \delta+m+3$
hence
$$P_I(G,U,\xi,k)\geq \frac{7\rho}{8}$$
where $\rho$ denotes the probability that $(h_1,\dots,h_k)\in 
\Delta_{\delta+m+3}.$ Therefore in order to conclude our proof it suffices to show that there exists a constant $c_1$ such that $\rho\geq 6/7$ if $k\geq c_1\sqrt{|G|}.$ Let $p$ be the probability that a randomly chosen element $h$ of $H$ satisfies the condition $C_V(h)\neq 0.$ We have
$$\rho=P(B(k,p)\geq \delta+m+3).$$
Therefore, by Proposition \ref{binomiale}, $\rho\geq 6/7$
if $k\geq \gamma(\delta+m+3)/p,$ being $\gamma=\gamma_{6/7}.$
Let $v$ be a fixed nonzero vector of $V$ and let
$H_v$ be the stabilizer of $v \in H.$ Clearly $p\geq |H_v|/|H|\geq 1/|V|= 1/q^n,$ hence $\rho\geq 6/7$ if
$k\geq \gamma (\delta+m+3)q^n.$
Since we are assuming $|G|=|H||V|^\delta\geq q^nm^2q^{n\delta}=q^{n(\delta+1)}m^2,$ there exists an absolute constant $c_1$ such that $\gamma(\delta+m+3)q^n\leq c_1mq^{n(\delta+1)/2}\leq c_1\sqrt{|G|}.$ Hence
$\rho\geq 6/7$ if $k\geq c_1\sqrt{|G|}.$

b) $|H|\geq |V|$ and $m\leq \alpha$, where $\alpha$ is the constant with appears in the statement of Proposition \ref{como}. Arguing as before, we have that $P_I(G,U,\xi,k)\geq 3/4$ if 
$$\gamma (\delta+m+3)q^n\leq  \gamma (\delta+\alpha+3)q^n\leq k.$$ We are assuming 
$|G| =  |H||V|^\delta\geq q^nq^{n\delta}=q^{n(\delta+1)}$, so there exists a constant $c_2$ such that $\gamma(\delta+m+3)q^n\leq \gamma(\delta+\alpha+3)q^n\leq c_2q^{n(\delta+1)/2}\leq c_2\sqrt{|G|}.$

c) $|V|\leq |H|\leq |V|m^2$ and $m >\alpha.$ We repeat the same argument as above, using  the bound $p\geq |H|/m^2,$ ensured by Proposition \ref{como}. We find that $P_I(G,U,\xi,k)\geq 3/4$ if
$k\geq \gamma(\delta+m+3)|H|/m^2.$ Since $|H|^{1/2}\leq q^{n/2}m,$ there exists 
 a constant $c_3$ such that 
 $$\frac{\gamma(\delta+m+3)|H|}{m^2} \leq \frac{\gamma(\delta+4)|H|}{m} \leq \gamma(\delta+4) |H|^{1/2}q^{1/2}\leq c_3 |H|^{1/2}q^{\delta/2}\leq c_3\sqrt{|G|}.$$
 
d)  $|H|\leq |V|=q^n.$
	Let $\Omega_{l}$ be the subset of $H^k$ consisting of the $k$-tuples $(h_1,\dots,h_k)$ with the property that $h_i=1$ for at least $l$ different choices of $i\in I=\{1,\dots,k\}.$ For a given $\omega \in \Omega_l,$  let $J_\omega=\{i\in I\mid h_i=1\}$ and let $l_\omega=|J_\omega|\geq l.$ We have that ${W_{J_\omega}}+{D_{J_\omega}}=0,$
	so the probability that ${r_{1,J_\omega}},\dots,{r_{\delta,J_\omega}}$ are $F$-linearly independent modulo ${W_{J_\omega}}+{D_{J_\omega}}=0$ is at least
	$$\begin{aligned}q_{\omega}&=\left(\frac{q^{n{l_\omega}}-1}{q^{n{l_\omega}}}\right)
	\cdots\left(\frac{q^{n{l_\omega}}-q^{n{l_\omega}-\delta-1}}{q^{n{l_\omega}}}\right)\\
	&=\left(1-\frac{1}{q^{n{l_\omega}}}\right)\dots \left(1-\frac{q^{\delta-1}}{q^{n{l_\omega}}}\right)\geq 1-\left(\frac{q^\delta-1}{q-1}\right)\frac{1}{q^{nl_\omega}}
	\geq 1-\left(\frac{q^\delta-1}{q-1}\right)\frac{1}{q^{nl}}.
	\end{aligned}$$
	Notice in particular that $q_{\omega}\geq 7/8$ if $nl\geq \delta+3$
	hence
	$$P_I(G,U,\xi,k)\geq \frac{7\rho}{8}$$
	where $\rho$ denotes the probability that the number of trivial entries in $(h_1,\dots,h_k)$ is larger than $\lceil(\delta+3)/n\rceil\leq \delta+3.$ Therefore in order to conclude our proof it suffices to show that there exist a constant $c_4$ such that $\rho\geq 6/7$ if $k\geq c_4\sqrt{|G|}.$ 
	By Proposition \ref{binomiale},
	$\rho \geq 6/7$ if
	$k\geq \gamma (\delta+3)|H|,$ being $\gamma=\gamma_{6/7}.$
	Since
	$|G|=|H||V^\delta|$ and $|H|\leq |V|,$ there
	there exists an absolute constant $c_4$ such that
	$${\gamma(\delta+3)|H|}\leq {c_4|H|q^{n/2(\delta-1)}}\leq c_4|H|^{1/2}q^{n\delta/2}\leq c_4\sqrt{|G|}.$$
	Hence
	$\rho\geq 6/7$ if $k\geq c_4\sqrt{|G|}.$
	
If we take $c=\max\{c^*,\sqrt{3}/2,c_1,c_2,c_3,c_4\},$ we have	$P_I(G,U,k)\geq {3}/{4}$.
\end{proof}

\section{Proof of Theorem \ref{due}}

An easy argument (see the end of this section) shows that in order to prove Theorem \ref{due}
it suffices to prove the statement for a particular choice of
the positive real number $\epsilon.$ So the proof of Theorem \ref{due} will be a corollary of the following result:

\begin{thm}\label{duenoni}
	Let $\bar c=15c$ where c is constant introduced in the statement of Lemma \ref{cruciale}. If $G$ is a finite  group and $k\geq \bar c\sqrt{|G|},$ then $P_I(G,k)\geq 2/9.$
\end{thm}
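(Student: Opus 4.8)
The plan is to proceed by induction on $|G|$, reducing to the case of trivial Frattini subgroup and then using the crown $I_G(A)/R_G(A)$ together with Lemma \ref{cruciale}.

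First I would dispose of the base case and the Frattini reduction. If $G$ has nontrivial Frattini subgroup $\Phi$, then $\langle x_1,\dots,x_d\rangle_I = G$ if and only if $\langle x_1\Phi,\dots,x_d\Phi\rangle_I = G/\Phi$, so $P_I(G,k) = P_I(G/\Phi,k)$ and we may apply induction to $G/\Phi$, which has strictly smaller order. So assume $\Phi(G) = 1$. If moreover $G = 1$ the statement is trivial, and for small $|G|$ the bound $\bar c\sqrt{|G|}$ forces $k$ large enough that $P_I(G,k)$ is essentially $1$; in any case the inductive step below will absorb these.

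Now assume $\Phi(G)=1$ and $G\neq 1$, and let $I = I_G(A)$, $R = R_G(A)$, $U$ be as in Lemma \ref{corona}, so $I = R\times U$ with $U$ a nontrivial normal subgroup. The key decomposition is
$$P_I(G,k_1+k_2)\geq P_I(G/U,k_1)\,P_I(G,U,k_2),$$
which is exactly the inequality recorded before Lemma \ref{cruciale} (applied with $N=U$). Since $|G/U| < |G|$, by induction $P_I(G/U,k_1)\geq 2/9$ provided $k_1 \geq \bar c\sqrt{|G/U|}$; and by Lemma \ref{cruciale}, $P_I(G,U,k_2)\geq 3/4$ provided $k_2 \geq c\sqrt{|G|}$. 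So I would split the given $k \geq \bar c\sqrt{|G|} = 15c\sqrt{|G|}$ as $k = k_1 + k_2$ with, say, $k_2 = \lceil c\sqrt{|G|}\rceil$ and $k_1 = k - k_2$. The point is that $|U|\geq 2$, hence $|G/U|\leq |G|/2$, so $\bar c\sqrt{|G/U|}\leq \bar c\sqrt{|G|}/\sqrt2 \leq 15c\sqrt{|G|} - c\sqrt{|G|} \leq k_1$ once one checks $15/\sqrt2 + 1 \leq 15$, i.e. $15/\sqrt 2 \le 14$, which holds since $15/\sqrt2 < 10.61$. (A little care is needed with the ceiling/floor, but there is ample slack.) Therefore $P_I(G,k)\geq P_I(G,k_1+k_2)\geq (2/9)(3/4) = 1/6$. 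Hmm — this gives $1/6$, not $2/9$, so the bookkeeping must be arranged more carefully, presumably by iterating the crown reduction rather than stopping after one step.

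**The main obstacle**, and the reason the naive one-step argument above yields only $1/6$, is that a single application loses the factor $3/4$ without recovering enough; the right approach is to peel off crowns repeatedly. I would instead argue: write $G = G_0 \rhd G_1 \rhd \cdots$ by successively forming $G_{i+1} = G_i/U_i$ where $U_i$ is the normal subgroup furnished by Lemma \ref{corona} applied to $G_i$ (after first killing any Frattini subgroup at each stage). Allocating a budget $k_i \geq c\sqrt{|G_i|}$ at stage $i$, and using $|G_{i+1}|\leq |G_i|/2$, the total budget needed is $\sum_i c\sqrt{|G_i|} \leq c\sqrt{|G|}\sum_{j\geq 0} 2^{-j/2} = c\sqrt{|G|}/(1-1/\sqrt2) < 15 c\sqrt{|G|} = \bar c\sqrt{|G|}$, since $1/(1-1/\sqrt2) < 3.42$. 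At each stage Lemma \ref{cruciale} contributes a factor $\geq 3/4$; but since the chain has length at most $\log_2|G|$, one cannot simply multiply all these factors. The resolution is that one does not need $\langle\cdots\rangle_I = G/U_i$ to hold at \emph{every} stage with the full $3/4$ — rather, by Lemma \ref{relativo} one only needs invariable generation modulo $U$ and modulo $R$ simultaneously, and the probabilistic estimate in Lemma \ref{cruciale} is already set up (its proof works with $\xi\in\Lambda_{G,U}$) so that the conditional probabilities telescope correctly against a single geometric series rather than compounding. I would therefore formulate the induction hypothesis directly as \emph{$P_I(G,k)\geq 2/9$ for all $k\geq\bar c\sqrt{|G|}$}, split off one crown with budget $k_2 = \lceil c\sqrt{|G|}\,\rceil$ giving factor $3/4$, apply induction to $G/U$ with the \emph{remaining} budget $k_1\geq\bar c\sqrt{|G/U|}$ giving factor $2/9$ — and then observe that $(3/4)(2/9) = 1/6 < 2/9$ shows this exact split is lossy, so the constant $15$ must be chosen precisely so that the budget for $G/U$ can instead be taken as $\bar c\sqrt{|G|}$ minus only a \emph{vanishing} fraction, letting the induction close at the fixed value $2/9$; concretely, since $|G/U|\le |G|/2$, we have $\bar c\sqrt{|G/U|} + c\sqrt{|G|} \le (\bar c/\sqrt2 + c)\sqrt{|G|} = (15/\sqrt2+1)c\sqrt{|G|} < 15c\sqrt{|G|}$, so both budgets fit inside $k$, and one is free to take the $G/U$-probability at the inductive value while the crown step's $3/4$ is \emph{not} multiplied in — because the events are nested via Lemma \ref{relativo}, not independent, so the correct bound is $P_I(G,k)\ge P_I(G/U,k_1) - (1 - 3/4) = 2/9 - 1/4$, which is negative and still wrong. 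This tells me the real argument keeps the product but starts the induction from a \emph{larger} base probability that a geometric product of $3/4$'s cannot destroy below $2/9$: indeed $\prod_{j\ge0}(3/4)$ diverges to $0$, so that cannot be it either — hence the true mechanism must be that only \emph{boundedly many} crown-steps carry a genuine $3/4$ loss while the rest are free, which is exactly what the dichotomy in the proof of Lemma \ref{cruciale} (abelian vs.\ nonabelian socle, and the cohomology bound $m\le\alpha$) is designed to guarantee. I expect the actual proof to exploit precisely this: bound the number of "expensive" crowns and let the cheap ones cost nothing, closing the induction at $2/9$.
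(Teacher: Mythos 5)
Your opening setup is correct: reduce to $\frat(G)=1$, apply Lemma \ref{corona} to get $I=R\times U$, and use the splitting $P_I(G,k_1+k_2)\geq P_I(G/U,k_1)P_I(G,U,k_2)$ together with Lemma \ref{cruciale}. You also correctly diagnose that the one-step induction only yields $(2/9)(3/4)=1/6$. But the long sequence of alternative fixes you then try are all wrong, and the final guess — that some crown steps are ``free'' because of the abelian/nonabelian or $m\le\alpha$ dichotomy inside the proof of Lemma \ref{cruciale} — has nothing to do with what is going on. That dichotomy is internal to proving the lemma and does not give a lossless version of the $3/4$ bound.

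The actual mechanism is a \emph{probability-boosting} trick combined with a \emph{two-level} crown reduction, neither of which appears in your proposal. The paper peels off two crowns: first $U_1$ (mod $F_1=\frat(G)$), then $U_2$ (mod $F_2/U_1=\frat(G/U_1)$), and splits into the cases $U_1=G$, $U_2=G$, and $U_2\neq G$. The first two cases give $P_I(G,k)\geq 3/4$ and $(3/4)^2$ directly from Lemma \ref{cruciale}. In the remaining case the crucial observation is that $U_1/F_1$ and $U_2/F_2$ involve \emph{non-$G$-equivalent} chief factors $A_1,A_2$, forcing $|A_1||A_2|\ge 6$ and hence $|G/U_2|\le |G|/6$. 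This lets one carve $k\geq 15c\sqrt{|G|}$ into $k_1\geq 2\lceil\bar c\sqrt{|G/U_2|}\rceil$, $k_2\geq c\sqrt{|G/U_1|}$, $k_3\geq c\sqrt{|G|}$ (using $30/\sqrt6+1/\sqrt2+1<15$). The key is that $k_1$ is taken \emph{twice} the inductive threshold, so by independence $P_I(G/U_2,k_1)\geq 1-(1-2/9)^2=32/81$, not merely $2/9$; then applying Lemma \ref{cruciale} twice gives $(32/81)(3/4)(3/4)=2/9$ exactly, closing the induction. Without doubling the budget on the inductive piece, and without the factor-$6$ drop in $|G/U_2|$ that makes the doubling affordable, the bookkeeping cannot close — which is precisely the obstruction you ran into but did not resolve.
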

\begin{proof}
	Let $F_1=\frat(G).$ By Lemma \ref{corona}, there exists a crown
	$I_1/R_1$ of $G$ and a nontrivial normal subgroup $U_1/F_1$ of
	$G/F_1$ such that $I_1/F_1=R_1/F_1\times U_1/F_1.$ If $U_1=G,$
	then, since $k \geq \bar c\sqrt{|G|} \geq c\sqrt{|G|},$ $P_I(G,k)=P_I(G/F_1,k)\geq 3/4$ by Lemma \ref{cruciale}.
	Otherwise let $F_2/U_1=\frat(G/U_1):$ again by  Lemma \ref{corona}, there exists a crown
	$I_2/R_2$ of $G$ and a nontrivial normal subgroup $U_2/F_2$ of
	$G/F_2$ such that $I_2/F_2=R_2/F_2\times U_2/F_2.$ If $U_2=G$
	then there exists two integers $k_1$ and $k_2$, both larger than $c\sqrt{|G|}$ and such that
	$k_1+k_2\leq \bar c\sqrt{|G|}.$ 
	By Lemma \ref{cruciale}, we have 
	$$\begin{aligned}P_I(G,k)&\geq P_I(G,k_1+k_2)\geq P_I(G/U_1,k_1)P(G,U_1,k_2)\\&=P_I(G/F_2,k_1)P(G,U_1,k_2)\geq \left( \frac {3}{4} \right)^2.\end{aligned}$$ Finally assume $G\neq U_2$. We have
	that $U_2/F_2\sim_G A_2^{\delta_2}$ and $U_1/F_1\sim_G A_1^{\delta_1}$
	where $A_1$ and $A_2$ are non $G$-equivalent chief factors of $G$: in particular $|A_1||A_2|\geq 6$ and consequently
	$|G|/|U_2|\leq |G|/6.$ But then
	$$\begin{aligned}k&\geq \bar c\sqrt{|G|} = 15c\sqrt{|G|}\geq  30\cdot c\sqrt{\frac{|G|}6}+c\sqrt{\frac{|G|}2}+c\sqrt{|G|}+4\\
	&\geq 2\left \lceil \bar c\sqrt{|G/U_2|}\right\rceil + \left \lceil c\sqrt{|{G}/{U_1}|}\right\rceil+\left \lceil c\sqrt{|G|}\right\rceil
	\end{aligned}$$
	and there exist three integers $k_1,$ $k_2$ and $k_3$ such that
	$$k_1+k_2+k_3\leq k,\quad k_1\geq 2\left \lceil \bar c\sqrt{|G/U_2|}\right\rceil,\quad  k_2 \geq c\sqrt{|G/U_1|} \text { and } k_3 \geq c\sqrt{|G|}.$$ By induction, if $t\geq \bar c\sqrt{|G/U_2|},$ then $p=P_I(G/U_2,t)\geq 2/9$ and consequently $$P_I(G/U_2,2t)\geq 1-(1-p)^2=2p-p^2\geq 32/81.$$
	Hence the probability that $(x_1,\dots,x_{k_1})\in G^{k_1}$
	satisfies the condition $$\langle x_1U_2,\dots,x_{k_1}U_2\rangle_I=G/U_2$$
	is at least $32/81.$
	Applying twice Lemma \ref{cruciale}, we conclude
	that $$P_I(G,k_1+k_2+k_3)\geq \frac{32}{81}\cdot \frac{3}{4}\cdot \frac{3}{4}=\frac{2}{9}.\qedhere$$
\end{proof}

\begin{proof}[Proof of Theorem \ref{due}]
	Given $0 < \epsilon < 1,$ there exists a positive integer $t$ such that
	$\epsilon \geq (7/9)^t.$ Let $\tau_\epsilon=t(1+\bar c)$ where $\bar c$ is the constant introduced in the statement of Theorem \ref{duenoni}. Let $k$ be an integer
	larger than $\tau_\epsilon \sqrt{|G|}.$ We have
	$$t\left \lceil \bar c\sqrt{|G|}\right\rceil \leq t\bar c\sqrt{|G|}+t=\tau_{\epsilon}{\sqrt {G}}\leq k$$
	hence there exist $t$ integers $k_1,\dots,k_t$
	such that $k_1+\cdots+k_t \leq k$ and $k_i \geq \bar c\sqrt{|G|}$ for all $i\in \{1,\dots,t\}.$ It follows
	$$P_I(G,k)\geq P_I(G,k_1+\dots+k_t)\geq 1-\prod_{1\leq i\leq t}\left(1-P_I(G,k_i)\right)\geq  1-(7/9)^t\geq 1 - \epsilon $$
	since $P_I(G,k_i)\geq 2/9$ by Theorem \ref{duenoni}.
\end{proof}

%%%%%%%%%%%%%%%%%%%%%%%%%%%%%%%%%%%%%%%%%%%%%%%%%

\end{document}